\newtheorem{theorem}{Theorem}[section]
\newtheorem{proposition}[theorem]{Proposition}
\newtheorem{corollary}[theorem]{Corollary}
\newtheorem{lemma}[theorem]{Lemma}
\theoremstyle{remark}
\newtheorem{remark}[theorem]{Remark}
\newtheorem{dummy}{Theorem} 
\newenvironment{realizationthm}[1][]
  {%
   \begin{dummy}[\bf Realization theorem]}
  {\end{dummy}}
 \newtheorem{dummyy}{Theorem} 
\newenvironment{interpolationthm}[1][]
  {%
   \begin{dummyy}[\bf Interpolation theorem]}
  {\end{dummyy}}
  \newtheorem{dummyyy}{Theorem} 
\newenvironment{Toeplitzcoronathm}[1][]
  {%
   \begin{dummyyy}[\bf Toeplitz corona theorem]}
  {\end{dummyyy}}
  \newtheorem{dummyyyy}{Theorem} 
\newenvironment{extensionthm}[1][]
  {%
   \begin{dummyyyy}[\bf Extension theorem]}
  {\end{dummyyyy}}
\theoremstyle{definition}
\newtheorem{definition}[theorem]{Definition}
\newcommand{\bq}{\begin{equation}}
\newcommand{\eq}{\end{equation}}
\newcommand{\beqn}{\begin{eqnarray*}}
\newcommand{\eeqn}{\end{eqnarray*}}
\newcommand{\beq}{\begin{eqnarray}}
\newcommand{\eeq}{\end{eqnarray}}
\newcommand{\rar}{\rightarrow}
\newcommand{\bc}{\begin{centre}}
\newcommand{\ec}{\end{centre}}
\newcommand{\SAE}{\mathcal S \mathcal A(\mathbb E)} \newcommand{\AKE}{\mathcal A\mathcal K (\mathbb E)}
\newcommand{\UCE}{\mathcal U \mathcal C(\mathbb E)}
\newcommand{\CDF}{C(\overline{\mathbb D})_F^+}
\newcommand{\CDE}{C(\overline{\mathbb D})_{\mathbb E}^+}
\newcommand{\PKE}{\mathbb {C}_{\mathbb E}^+}
\newcommand{\SAFE}{\mathcal{SA}_f(\mathbb E)}
\newcommand{\ba}{\begin{array}}
\newcommand{\ea}{\end{array}}
\newcommand{\inp}[2]{\langle{#1},\,{#2} \rangle}
\renewcommand{\Delta}{{\nabla}}
\newcommand*{\Ge}{\geqslant}
\newcommand*{\Le}{\leqslant}
\begin{document}
\title[Realization, interpolation, Toeplitz corona and extension]{Function Theory on Tetrablock: Realization, Interpolation, Extension and Toeplitz Corona Theorem}
\author[S. Jain]{Shubham Jain}
\address [S. Jain]{Department of Mathematics, Indian Institute of Technology Guwahati, Guwahati 781039, India}
\email{shubjainiitg@iitg.ac.in, shubhamjain4343@gmail.com}
\author[S. Kumar ]{Surjit Kumar}
\address[S. Kumar]{Department of Mathematics, Indian Institute of Technology Madras, Chennai 600036, India} 
\email{surjit@iitm.ac.in}
\author[M. K. Mal]{Milan Kumar Mal}
\address[M. K. Mal]{Department of Mathematics, Indian Institute of Technology Madras, Chennai 600036, India}
 \email{ma21d018@smail.iitm.ac.in; milanmal1702@gmail.com }
\author[P. Pramanick]{Paramita Pramanick}
\address[P. Pramanick]{Statistics and Mathematics Unit, Indian Statistical Institute Kolkata, Kolkata 700108, India}
\email{paramitapramanick@gmail.com}


\subjclass[2020]{Primary 32A70, 47A13, 47A56, 47A57, Secondary 46E22, 47B32}
\keywords{tetrablock, Schur–Agler class, realization formula, interpolation theorem, Toeplitz corona problem, extension theorem.}

\date{}

\begin{abstract}
We introduce a Schur–Agler type class associated with the tetrablock and establish a realization theorem for this class. Furthermore, we provide a tetrablock analog of the interpolation theorem, extension theorem, and the Toeplitz corona theorem.
\end{abstract}

\maketitle

\section{Introduction}
The classical {\it Schur class} of the unit disc $\mathbb D$, denoted by $\mathcal{S}(\mathbb D),$ consists of the holomorphic functions $f$ which satisfy $\displaystyle \sup_{z \in \mathbb D}|f(z)| \leq 1.$ The corresponding realization theorem states that $f\in \mathcal{S}(\mathbb D)$ if and only if there exist a Hilbert space $\mathfrak{H}$ and 
a unitary operator $$ V=\begin{bmatrix}
    A & B\\ C& D
\end{bmatrix}: {\substack{\mathbb C\\ \oplus\\ \mathfrak{H}}} \to {\substack{\mathbb C\\ \oplus\\ \mathfrak{H}}}$$ such that $f(z)=A +zB (I_\mathfrak{H}-zD)^{-1}C.$

Agler generalized this to the bidisc in \cite{AJ1990} (also, see \cite[Theorem 11.13]{AM2002}). A realization theorem on certain general domains $\Omega$ has also been obtained in  \cite{AJ1990} for {\it Schur-Agler class}, a subclass of  Schur class $\mathcal{S}(\Omega)=\{f \in \mathrm{Hol}(\Omega) :\sup_{z \in \Omega}|f(z)|\leq 1\}.$ 
The case when the domain $\Omega$ is the unit disc or the unit bidisc or the symmetrized bidisc, the Schur-Agler class coincides with the Schur class (see \cite{BS2018, AY2017, AM2002, AM2020, AY1999} ). 
For the domains of the type $\mathcal{D}_{\bf Q}=\{ z\in \mathbb C^n: \|{\bf Q}(z)\|<1\}$, a realization formula for the Schur-Agler class is given in \cite{BB2004, BMV2018}, where ${\bf Q}(z)$ is a matrix-valued polynomial in $n$ complex variables (refer also to \cite{AT2003}). 
A class of test functions has been used in \cite{DMM2007, DM2007} to develop a generalization of the realization theorem for a wider class of domains, including the annulus.
A standard application of the realization formula is a Pick interpolation type theorem (see \cite[Theorem 1.10]{BB2004}, \cite[Theorem 5.1]{AY2017}, \cite[page. 508]{BS2018}, \cite[Chapter 11]{AM2002}). 
Furthermore, a vector-valued generalization of the realization theorem has been carried out on various domains. Subsequently, an appropriate version of the Toeplitz corona theorem has been obtained (see \cite[Theorem 1.3]{BS2022}, \cite[Theorem 5.1]{BB2004}, \cite[Theorem 11.57]{AM2002}).

In this note, complex separable Hilbert spaces are denoted by $\mathfrak{H}, \mathfrak{K}, \mathfrak{L}$ etc. Let $\mathcal{B}(\mathfrak{K}) $ denote the algebra of bounded linear operators on $\mathfrak{K.}$ By a commuting $3$-tuple $T=(T_1, T_2, T_3)$ on $\mathfrak{K},$ we mean that $T_1, T_2, T_3$ are mutually commuting bounded linear operators on $\mathfrak{K}.$ The Hilbert space adjoint of the commuting $3$-tuple $T=(T_1,T_2,T_3)$ is denoted by $T^*=(T_1^*, T_2^*, T_3^*).$ Let $\sigma(T)$ and $\sigma_\pi(T)$ denote the Taylor joint spectrum and the approximate point spectrum of a commuting tuple $T$, respectively.
If $Y$ is a subset of $\mathfrak{H}$, then the closed linear span of $Y$ is denoted by $\bigvee\{x: x\in Y\}.$ We use $A^t$ to denote the transpose of a matrix $A,$ and write $A\geq 0$ to indicate the positive semi-definiteness of $A.$ 
Let $X$ be a non-empty set. For any function $f:X\to \mathbb C,$ $\|f\|_\infty$ denote the supremum norm of $f.$ A function $g:X \times X \to \mathbb C$ is said to be positive semi-definite if for any $n\in \mathbb N$, $x_1,\ldots,x_n\in X,$ and scalars $c_1,\ldots,c_n,$ we have $\sum_{i,j=1}^n \bar{c}_ic_j g(x_i,x_j) \geq 0.$ The notation $g\succcurlyeq 0$ on $X$ is used to indicate that a function $g:X \times X \rar \mathbb C $ is positive semi-definite.
 For $g_1, g_2 : X \times X \to \mathbb{C},$ by the (Schur) product $g_1g_2,$ we mean the point-wise product of $g_1$ and $g_2,$ that is, $g_1g_2(z,w)=g_1(z,w)g_2(z,w).$
The closure of a domain $\Omega$ is denoted by $\overline{\Omega}.$  The notation $\rm{Hol}(\Omega)$ stands for the set of all holomorphic functions on a domain $\Omega.$ The space of complex-valued continuous functions on $\overline{\Omega}$ is denoted by $C(\overline {\Omega})$ and $A(\overline{\Omega})$ stands for $\mathrm{Hol}(\Omega) \cap C(\overline {\Omega}).$ The set of all complex-valued bounded holomorphic functions on a domain $\Omega$ is denoted by $H^\infty(\Omega)$.
For $s>0, \; z=(z^{(1)}, z^{(2)}, z^{(3)}) \in \mathbb C^3,$ we denote $s \cdot z=(sz^{(1)}, sz^{(2)}, s^2z^{(3)}).$  

The {\it tetrablock} is the domain \beq \label{tetra-new} \mathbb E = \{z \in \mathbb C^3 : 1-z^{(1)}\alpha-z^{(2)}\beta+z^{(3)}\alpha \beta \neq 0~\mbox{for all~} \alpha,\beta \in \overline{\mathbb D}\}.
\eeq
There has been considerable work on the tetrablock, examined from both geometric and operator-theoretic perspectives, for instance, see \cite{AWY2007, BT2014}.
One of the characterizations of the tetrablock obtained in \cite[Theorem 2.2]{AWY2007}, is as follows: 
\beq\label{domain description}
z \in \mathbb E \mbox{ if and only if } |z^{(2)}|<1 \mbox{ and } |\psi_{\alpha}(z)|<1 \mbox{ for all } \alpha \in \overline{\mathbb D} ,
\eeq 
where $\psi_\alpha(z)=\frac{\alpha z^{(3)}-z^{(1)}}{\alpha z^{(2)}-1}.$
 For any $z \in \mathbb E$, let $E(z):\overline{\mathbb D}\to \mathbb C$ be the function given by
 \beq \label{E} E(z)(\alpha)=\frac{\alpha z^{(3)}-z^{(1)}}{\alpha z^{(2)}-1}.\eeq
Note that $E(z)$ is a continuous function on $\overline{\mathbb D}$ and $\|E(z)\|_{\infty} <1$ for any $z \in \mathbb E$. 

 We now define a class $\mathcal M$ of commuting $3$-tuples $T=(T_1, T_2, T_3)$ satisfying $$\|T_2\| <1 \mbox{ and } \|\psi_{\alpha}(T)\|=\|(\alpha T_3-T_1)(\alpha T_2-I)^{-1}\|<1\; \text{for all } \alpha \in \overline{\mathbb D}.$$

It is straightforward to verify that if $T \in \mathcal M$ then
    \begin{itemize}
        \item[(i)] $T^* \in \mathcal M.$ Indeed, $\|T_2\|<1$ if and only if $\|T_2^*\|<1.$ Since $\overline {\mathbb D}$ is closed under conjugation, we have $\|\psi_\alpha(T)\|<1$ if and only if $\|\psi_{\alpha}(T^*)\|<1$ for each $\alpha \in \overline {\mathbb D}.$
        \item [(ii)] $r \cdot T:=(rT_1,rT_2,r^2 T_3) \in \mathcal M$ for any $0 < r \leq 1.$
    \end{itemize}   
For clarity, we occasionally denote $(T, \mathfrak{K}) \in \mathcal{M}$ instead of $T\in \mathcal{M},$ to stress that the operator tuple $T=(T_1,T_2,T_3)$ is defined on $\mathfrak{K}.$ 
We now introduce the Schur-Agler type class associated with the tetrablock $\mathbb E$ (cf. \cite[page 50]{BB2004}), which requires that  $\sigma(T) \subseteq \mathbb E$ for any $T \in \mathcal M$. This has been observed in Lemma \ref{spectrum}. 
\begin{definition} \label{Schur-Agler class}
The\textit{ Schur-Agler type class} for the tetrablock $\mathbb E$, denoted by $\mathcal S \mathcal A(\mathbb E),$  is defined as 
  \beqn 
\mathcal S \mathcal A(\mathbb E):=\left\{f \in \mathrm{Hol}(\mathbb E) : \|f(T)\| \leq1 \mbox{ for all } T \in \mathcal M \right\}.
\eeqn  
\end{definition}

A weak kernel $k$ on $\mathbb E$ is defined as a function $k: \mathbb E \times \mathbb E \rightarrow \mathbb C$ that is positive semi-definite. If, in addition, $k(z, z) \neq 0$ for all $z \in \mathbb E,$ we say that $k$ is a kernel.
Let  $F $ be any subset of $\mathbb E$. A function $\Gamma: F \times F \to C(\overline{\mathbb D})^*$ is said to be {\it positive kernel with values in $C(\overline{\mathbb D})^*$} if it satisfies the following property: for any $n \in \mathbb N$, and for any $z_1,\ldots,z_n \in F, \;\; c_1,\ldots,c_n \in \mathbb C$, and $f_1,\ldots,f_n \in C(\overline{\mathbb D}),$ we have 
$$\sum_{i,j=1}^n \bar{c}_i c_j \Gamma(z_i,z_j)(f_i \bar{f}_j) \geq 0.$$ 
The set of all positive kernels with values in $C(\overline{\mathbb D})^*$ is denoted by $C(\overline{\mathbb D})_F^+$, and the set of all positive semi-definite functions $\Delta: F \times F \to \mathbb C$ is denoted by $\mathbb C_F^+.$

The definition of an admissible kernel given below is motivated by \cite[Definition 11.1]{AM2002}. 
\begin{definition} 
A kernel $k: \mathbb E \times \mathbb E \rightarrow \mathbb C$ is said to be \textit{admissible} if $(1-z^{(2)}\overline{w}^{(2)})k(z,w) \succcurlyeq 0$ and $(1 - \psi_\alpha (z) \overline{\psi_\alpha (w)})k(z, w) \succcurlyeq 0$ for all $ \alpha \in \overline{\mathbb D}.$
Similarly, we define {\it weak admissibility} by replacing the kernel with a weak kernel.
\end{definition}
The class of admissible kernels on $\mathbb E$ is denoted by $\mathcal A\mathcal K (\mathbb E).$ 

A notion of Hardy space on a class of domains consisting tetrablock can be found in \cite{GR2024}. The Hardy space $H^2(\mathbb{E})$ on the tetrablock $ \mathbb{E}$ was recently studied in \cite[Section 3.2]{BCJ2024}, following the approach in \cite{MRZ2013}. A key idea in this approach is to realize that $\mathbb{E}$ is the image of a type-II classical Cartan domain in three dimensions under a proper holomorphic map. 
To be more precise, there exists a unitary map between the Hardy space $H^2(\mathbb{E})$ and a closed subspace of the Hardy space of a type-II Cartan domain (see \cite[Theorem~1.3]{BCJ2024}). The multiplier algebra of $H^2(\mathbb{E})$ coincides with $H^\infty(\mathbb{E})$ (see \cite[p.~1]{HM1969}). In particular, for every $\alpha \in \overline{\mathbb{D}}$, the function $\psi_\alpha$ and $z^{(2)}$ acts as a contractive multiplier on $H^2(\mathbb{E})$. This combined with \cite[Theorem~5.21]{PR2016} yields that the Szeg\"o  kernel on $\mathbb E$, the reproducing kernel for $H^2(\mathbb{E}),$ as described in \cite[Eqn.~(9)]{BCJ2024}, is an admissible kernel.

In this paper, we introduce a Schur–Agler type class associated with the tetrablock (see Definition \ref{Schur-Agler class}) and establish a realization theorem (Theorem \ref{realization theorem}) for this class. Furthermore, we present an appropriate version of the Nevanlinna–Pick interpolation theorem (Theorem \ref{interpolation theorem}), extension theorem (Theorem \ref{extension theorem}), and the Toeplitz corona theorem (Theorem \ref{Toeplitzcoronatheorem1}). 

In Section \ref{S2}, we compile the necessary results to establish Theorem \ref{realization theorem}. 
Section \ref{S3} is devoted to prove one of the main results of the paper, the realization theorem: 
\begin{realizationthm}\label{realization theorem}
Let $f :\mathbb E\to \mathbb C$ be a function. Then the following statements are equivalent:
    \begin{itemize}
        \item [$\mathrm{(i)}$] $f \in \SAE.$
        \item [$\mathrm{(ii)}$]  $(1 - f(z) \overline{f (w)})k(z, w) \succcurlyeq 0$ on $\mathbb E$ for each $k \in \AKE.$
        \item [$\mathrm{(iii)}$] There exist a positive kernel $\Gamma: \mathbb E \times \mathbb E \rightarrow C(\overline{\mathbb D})^*$ and a weak kernel $\Delta: \mathbb E \times \mathbb E \to \mathbb C$ such that for each $z,w \in \mathbb E,$
        \begin{equation}\label{keyequation1}
         1 - f(z) \overline{f (w)}=\Gamma(z, w)\Big(1- E(z)\overline{E(w)}\Big)+ (1-z^{(2)}\overline{w}^{(2)})\Delta(z,w),
        \end{equation} where $E(z)$ is as in \eqref{E}.
         \item [$\mathrm{(iv)}$] There are Hilbert spaces $\mathfrak {H}_1,\;\mathfrak{H}_2,$ a unital $*$-representation $\rho :C(\overline{\mathbb D}) \rightarrow \mathcal{B}(\mathfrak{H}_1)$ and a unitary $V : \mathbb C \oplus \mathfrak{H} \rightarrow \mathbb C \oplus \mathfrak{H}$, $\mathfrak{H}=\mathfrak{H}_1\oplus \mathfrak{H}_2,$ such that $$V=\begin{bmatrix}
A & B \\
C & D
\end{bmatrix},\; \text{and}$$ 
$$f(z)=A +B\begin{bmatrix} 
    \rho(E(z)) & 0\\ 0& z^{(2)}I_{\mathfrak H_2} 
\end{bmatrix}\left(I_{\mathfrak{H}}-D\begin{bmatrix} 
    \rho(E(z)) & 0\\ 0& z^{(2)}I_{\mathfrak H_2} 
\end{bmatrix}\right)^{-1}C.$$
    \end{itemize}
\end{realizationthm}
The classical interpolation theorem on the unit disc states that, given points \( \lambda_1, \dots, \lambda_n \in \mathbb{D} \) and \( w_1, \dots, w_n \in \overline{\mathbb{D}} \), there exists a function $f \in \mathcal S(\mathbb D)$ such that \( f(\lambda_i) = w_i \) for \( 1 \leq i \leq n \) if and only if the associated Pick matrix
\beq P=\label{basic pick matrix}
\begin{bmatrix} \frac{1 - w_i \overline{w}_j}{1 - \lambda_i \overline{\lambda}_j} \end{bmatrix}_{i,j=1}^n \geq 0,
\eeq
that is, $P$ is positive semi-definite (see \cite[Theorem 1.3]{AM2002}). In Subsection \ref{ss4.1}, we establish a suitable analogue of this result in the setting of the tetrablock as an elegant consequence of the realization theorem. 
\begin{interpolationthm} \label{interpolation theorem}
  Let $z_1, \ldots, z_n \in \mathbb E$ and $w_1, \ldots, w_n \in \overline{\mathbb D}$. Then the following statements are equivalent:
    \begin{itemize}
        \item [(i)] There exists a function $f \in \mathcal{SA}(\mathbb E)$ such that $f(z_i)=w_i$ for all $i=1,\ldots,n.$
        \item [(ii)] For all $k \in \AKE$, we have 
        $$\begin{bmatrix}
            (1-w_i\overline{w}_j)k(z_i,z_j)
        \end{bmatrix}_{i,j=1}^n \geq 0.$$
        \item [(iii)] Let $F=\{z_1,\ldots z_n\}.$ There exist a positive kernel $\Gamma \in  C(\overline{\mathbb D})_F^+$ and a weak kernel $\Delta \in \mathbb C_F^+$ such that  for all $i,j=1,\ldots,n,$
        \beq \label{interpolationeq1} 1-w_i\overline{w}_j=\Gamma(z_i,z_j)(1-E(z_i)\overline{E(z_j)})+ (1-z_i^{(2)}\bar{z}_j^{(2)})\Delta(z_i,z_j).
        \eeq 
    \end{itemize}
\end{interpolationthm}
Let $V$ be a subset of $\mathbb E$. Denote $\mathcal H \mathcal E(V),$ the collection of all bounded functions on $V$ which has an extension to a holomorphic function in a neighbourhood of $V$. We say that a commuting $3$-tuple $T$ is subordinate to $V$, if $\sigma(T) \subseteq V$ and $g(T)=0$ for every function $g$ that is holomorphic in a neighbourhood of $V$ and $g|_{V}=0.$ 
If $f \in \mathcal H \mathcal E(V)$ and $T$ is subordinate to $V$, then $f(T)$ can be defined as $f(T)=g(T),$ where $g$ is any extension of $f$ in a neighbourhood of $V.$

Subsection \ref{S6} is devoted to the extension theorem, that is, Theorem \ref{extension theorem}. 
Before stating the extension theorem, we introduce the following notation. Let $\mathcal{M}'$ be the collection of all commuting $3$-tuples $T=(T_1, T_2, T_3)$ with $\sigma(T)\subseteq \mathbb E$ such that  $$\|T_2\| \leq 1 \mbox{ and } \|\psi_{\alpha}(T)\|\leq1\; \text{for all } \alpha \in \overline{\mathbb D}.$$
\begin{extensionthm}\label{extension theorem}
    Let $V \subseteq \mathbb E$ and let $f \in \mathcal H \mathcal E(V).$ Then there exists $g \in H^\infty(\mathbb E)$ such that $\frac{1}{\|f\|_{\infty}}g\in \mathcal{SA}(\mathbb E)$, $g|_{V}=f$, and $\|g\|_\infty=\|f\|_{\infty}$ if and only if \beq \label{extension condition} \|f(T)\| \leq \|f\|_{\infty},\; \text{ for any } T \in \mathcal{M}'\; \text{ subordinate to } V.\eeq 
\end{extensionthm}
 A vector-valued analog of Schur-Agler type class is defined below (cf. \cite{BB2004}).
\begin{definition}
    Let $\mathfrak{L}_1, \mathfrak{L}_2$ be two Hilbert spaces. The $\mathcal{B}(\mathfrak{L}_1,\mathfrak{L}_2)$-valued Schur-Agler type class, denoted by $\mathcal{SA}_{\mathbb E}(\mathfrak{L}_1,\mathfrak{L}_2)$, is the collection of all holomorphic functions $f: \mathbb E\to \mathcal{B}(\mathfrak{L}_1,\mathfrak{L}_2) $ such that $\|f(T)\| \leq 1$ for all $T \in \mathcal{M}$.
\end{definition}
In section \ref{S5}, we present a vector-valued version of Theorem \ref{realization theorem} for the class $\mathcal{SA}_{\mathbb E}(\mathfrak{L}_1,\mathfrak{L}_2)$ (see Theorem \ref{realization theorem vector}). As an application, we obtain the Toeplitz corona theorem:
\begin{Toeplitzcoronathm} \label{Toeplitzcoronatheorem1}
    Let $\varphi_1,\ldots,\varphi_n \in H^\infty(\mathbb E)$ and let $\delta$ be a positive real number. Then the following are equivalent:
    \begin{itemize}
        \item [(i)] There exists $f_1,\ldots,f_n\in H^\infty(\mathbb E)$ such that $(f_1,\ldots,f_n)^t \in \frac{1}{\delta}\mathcal{SA}_{\mathbb E}(\mathbb C,\mathbb C^n)$ and $$\sum_{j=1}^n\varphi_j(z)f_j(z)=1\;\; \text{ for all } z \in \mathbb E.$$
        \item [(ii)] The function $\big(\sum_{j=1}^n \varphi_j(z)\overline{\varphi_j(w)}-\delta^2\big)k(z,w)$ is positive semi-definite for all $k\in \AKE.$
        \item [(iii)] There exist a positive kernel $\Gamma:\mathbb E \times \mathbb E\to C(\overline{\mathbb D})^*$ and a weak kernel $\Delta: \mathbb E \times \mathbb E \to \mathbb C$ such that for all $z,w \in \mathbb E$, $$\sum_{j=1}^n \varphi_j(z)\overline{\varphi_j(w)}-\delta^2=\Gamma(z,w)(1-E(z)\overline{E(w)})+(1-z^{(2)}\overline{w}^{(2)})\Delta(z,w).$$
    \end{itemize}
\end{Toeplitzcoronathm}

\section{Preparatory results to prove the realization theorem} \label{S2}

In this section, we first prove an analogous result obtained in \cite[Proposition 3.3]{DM2007}( see also \cite{DMM2007}) in the context of the tetrablock. Though the idea of the proof is borrowed from \cite[Theorem 2.53]{AM2002}, for the sake of completeness, we provide its proof.

\begin{proposition}\label{decompositionresult}
    Let $\Gamma \in \CDF.$ Then there exist a Hilbert space $\mathfrak{H}$ and a function $L:F \to \mathcal{B}(C(\overline{\mathbb D}),\mathfrak{H})$ such that 
    \beq \label{kerneldecomposition} \Gamma(z,w)(f\bar{h})=\inp{L(z)f}{L(w)h}_{\mathfrak{H}}\; \text{for all } f,h \in C(\overline{\mathbb D}) \text{ and } z,w \in F.
    \eeq
    Moreover, there exists a unital $*$-representation $\rho:C(\overline {\mathbb D}) \to \mathcal{B}(\mathfrak{H})$ such that $L(z)(fh)=\rho(f)L(z)(h)$ for all $z \in F$ and $f, h \in  C(\overline{\mathbb D}).$ 
\end{proposition}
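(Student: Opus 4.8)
The plan is to mimic the GNS/Kolmogorov construction that produces a reproducing kernel Hilbert space from a positive semi-definite function, but carried out at the level of the operator-valued "inner products" $\Gamma(z,w)(f h^*)$. First I would form the algebraic free module $\mathcal{F} = \bigoplus_{z \in F} C(\overline{\mathbb D})$, i.e.\ the space of finitely supported functions $F \to C(\overline{\mathbb D})$; write $e_z \otimes f$ for the elementary tensor placing $f \in C(\overline{\mathbb D})$ in the slot indexed by $z$. On $\mathcal{F}$ define the sesquilinear form
\[
\Big\langle \sum_i e_{z_i}\otimes f_i,\ \sum_j e_{w_j}\otimes h_j \Big\rangle := \sum_{i,j} \Gamma(z_i, w_j)\big(f_i\, \overline{h_j}\big).
\]
The hypothesis $\Gamma \in \CDF$ is exactly the statement that this form is positive semi-definite (taking the test functions and scalars from the definition and absorbing the scalars into the $C(\overline{\mathbb D})$-entries). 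Then I would quotient by the null space $\mathcal{N} = \{\xi \in \mathcal{F} : \langle \xi, \xi\rangle = 0\}$ — which by Cauchy–Schwarz for semi-definite forms is a subspace and equals the radical of the form — and complete $\mathcal{F}/\mathcal{N}$ to obtain the Hilbert space $\mathfrak{H}$. Defining $L(z) f := [e_z \otimes f] \in \mathfrak{H}$ gives a map $L(z) : C(\overline{\mathbb D}) \to \mathfrak{H}$, and \eqref{kerneldecomposition} holds by construction. Linearity of $L(z)$ in $f$ is immediate; boundedness of each $L(z)$ follows since $\|L(z)f\|^2 = \Gamma(z,z)(f\overline{f}) \le \|\Gamma(z,z)\|\,\|f\|_\infty^2$ because $\Gamma(z,z)$ is a positive functional on $C(\overline{\mathbb D})$, hence bounded.

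For the moreover part, I would define the $*$-representation $\rho : C(\overline{\mathbb D}) \to \mathcal{B}(\mathfrak{H})$ by first specifying it on the dense subspace: $\rho(g)\big([e_z \otimes h]\big) := [e_z \otimes (g h)]$, extended linearly. The key point is that this is well defined (independent of the representative) and bounded. Well-definedness reduces to showing $\rho(g)$ maps $\mathcal{N}$ into $\mathcal{N}$; boundedness with $\|\rho(g)\| \le \|g\|_\infty$ is the substantive estimate. Both follow from the standard trick: for fixed $g$ with $\|g\|_\infty \le 1$, the function $(z,w) \mapsto \Gamma(z,w)\big((1 - g\overline{g})\, f_z \overline{f_w}\big)$ is positive semi-definite — this is again just the defining property of $\CDF$ applied with the test functions $g\cdot$ something versus plain functions, combined with positivity of $1 - |g|^2$ — which gives $\langle \rho(g)\xi, \rho(g)\xi\rangle \le \langle \xi, \xi\rangle$ for all $\xi$ in the dense subspace; applying this to $\xi \in \mathcal{N}$ kills the null vectors, and the inequality itself yields the norm bound after scaling. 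Then $\rho$ extends to all of $C(\overline{\mathbb D})$ by scaling, it is clearly multiplicative and unital on the dense subspace hence everywhere, and $\rho(\overline g) = \rho(g)^*$ follows by checking $\langle \rho(g)[e_z\otimes h], [e_w \otimes h']\rangle = \Gamma(z,w)(gh\,\overline{h'}) = \Gamma(z,w)(h\,\overline{\overline g h'}) = \langle [e_z\otimes h], \rho(\overline g)[e_w\otimes h']\rangle$. Finally $L(z)(fh) = [e_z \otimes fh] = \rho(f)[e_z\otimes h] = \rho(f)L(z)(h)$, which is the intertwining identity claimed.

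I expect the main obstacle to be the careful verification that $\rho(g)$ is well defined and bounded, i.e.\ the passage from "the form $\Gamma(\cdot,\cdot)(\,\cdot\,)$ is positive" to "multiplication by $g$ is a contraction modulo the null space". The slick way is to observe that for any finite set $z_1, \dots, z_n \in F$ and $f_1, \dots, f_n \in C(\overline{\mathbb D})$, positivity of $\big(\!\big(\Gamma(z_i, z_j)(f_i \overline{f_j})\big)\!\big)_{i,j}$ as a scalar matrix (which is what $\Gamma \in \CDF$ gives, since $\sum \bar c_i c_j \Gamma(z_i,z_j)(f_i\overline{f_j}) \ge 0$ says precisely the Hadamard-type sum is nonnegative — more precisely the Gram matrix is PSD) lets one run the computation $\|\rho(g)\sum_i e_{z_i}\otimes f_i\|^2 = \sum_{i,j}\Gamma(z_i,z_j)(g f_i \overline{g f_j})$ and compare with $\sum_{i,j}\Gamma(z_i,z_j)(f_i\overline{f_j})$ via the factorization of $\Gamma$ over finite sets through a Hilbert space together with $\|g\|_\infty \le 1$; one uses that $\|M_g\| \le 1$ on $L^2(\mu)$ for any positive measure $\mu$ built from the representing vectors, which is where the uniform norm on $C(\overline{\mathbb D})$ enters decisively. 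A cleaner route, since the excerpt explicitly says to follow the idea of \cite[Theorem 2.53]{AM2002}, is to invoke the Stinespring-type dilation for completely positive maps already implicit in the structure of $\CDF$; but the direct Kolmogorov construction above is self-contained and transparent, so that is what I would write up.
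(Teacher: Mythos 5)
Your proposal is correct and follows essentially the same route as the paper: both are the Kolmogorov/GNS construction, the paper obtaining the Hilbert space by applying \cite[Theorem 2.53]{AM2002} to the auxiliary scalar positive semi-definite function $\Gamma'((z,h_1),(w,h_2))=\Gamma(z,w)(h_1h_2^*)$ on $F\times C(\overline{\mathbb D})$, while you build the same space directly from the free module $\bigoplus_{z\in F}C(\overline{\mathbb D})$ and quotient by the radical, yielding the identical $L$ and $\rho$. If anything your write-up is more complete than the paper's at the one delicate point: you justify well-definedness and contractivity of $\rho(g)$ via positive semi-definiteness of $(z,w)\mapsto\Gamma(z,w)\big((1-g\overline{g})f\overline{h}\big)$ (which, to be fully explicit, uses the factorization $1-g\overline{g}=v\overline{v}$ with $v=\sqrt{1-|g|^2}\in C(\overline{\mathbb D})$ so that the defining positivity of $\Gamma\in\CDF$ applies to the test functions $vf_i$), whereas the paper only verifies the $*$-symmetry of $\rho$ and leaves boundedness implicit.
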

\begin{proof}
    Consider the map $\Gamma':(F\times C(\overline{\mathbb D})) \times (F\times C(\overline{\mathbb D})) \to \mathbb C$, defined by $\Gamma '((z,h_1),(w,h_2))=\Gamma(z,w)(h_1\bar{h}_2).$ 
  A routine verification shows that $\Gamma'$ is a positive semi-definite function on $F\times C(\overline{\mathbb D}).$ By \cite[Theorem 2.53]{AM2002}, there exist a Hilbert space $\mathfrak{H}$ and a function $g: F \times C(\overline{\mathbb D}) \rar \mathfrak{H}$ such that $$\Gamma'((z,h_1),(w,h_2))=\inp{g(z,h_1)}{g(w,h_2)}_{\mathfrak{H}}.$$
  In fact, the Hilbert space $\mathfrak H$ can be chosen such that $\mathfrak{H}=\bigvee\{g(z,h): z\in F, h\in C(\overline{\mathbb D})\}$.
Let  $L: F \to \mathcal{B}(C(\overline{\mathbb D}),\mathfrak{H})$ be a map given by $L(z)(h)=g(z,h)$ for any $z\in F$ and $h \in C(\overline{\mathbb D}).$ Note that $L$ is well defined. Indeed, for any $z\in F$, $$\|L(z)h\|^2=\|g(z,h)\|^2=\Gamma(z,z)(h\bar{h}) \leq \|\Gamma(z,z)\| \|h\|_{\infty}^2.$$ 
Consider $\rho:C(\overline{\mathbb D})\to \mathcal{B}(\mathfrak{H})$, defined by $\rho(h_1)g(z,h_2)=g(z,h_1h_2)$. It is easy to check that $\rho$ is a unital $*$-representation and 
$\rho(h_1)L(z)(h_2)=L(z)(h_1h_2).$ This completes the proof.
\end{proof}

Let $F$ be a finite subset of $\mathbb E$ and $|F|$ be the cardinality of $F.$ Let $\mathcal{C}_F$ be the collection  of $|F| \times |F|$ matrices of the form $$
      \begin{bmatrix}\Gamma(z,w)(1-E(z)\overline{E(w)})+(1-z^{(2)}\overline{w}^{(2)})\Delta(z,w)\end{bmatrix}_{z,w \in F},$$ where $\Gamma \in C(\overline{\mathbb D})_F^+$ and $ \Delta \in \mathbb C_F^+.$ Following the 
approach in \cite[Lemma 3.4]{DM2007}, we observe that $\mathcal C_F$ is a closed cone.
Also, for any positive semi-definite $|F|\times |F|$ matrix $P=\begin{bmatrix}
    p(z,w)
\end{bmatrix}_{z,w \in F}$, if we consider $\Delta(z,w)=P(z,w)\Delta_1(z,w)$, where $\Delta_1(z,w)=\frac{1}{1-z^{(2)}\overline{w}^{(2)}}$, then  
$$\begin{bmatrix}P(z,w)\end{bmatrix}_{z,w\in F}=\begin{bmatrix}(1-z^{(2)}\overline{w}^{(2)})\Delta(z,w)\end{bmatrix}_{z,w\in F} \in \mathcal{C}_F\; \text{with } \Gamma=0.$$
This shows that $\mathcal{C}_F$ contains all positive semi-definite $|F| \times |F|$ matrices, and hence has nonempty interior.
Clearly, the $|F| \times |F|$ matrix $[1]$ with all entries equal to $1$ is in $\mathcal{C}_F$ and 
 the matrix $[c_i\bar{c}_j]_{i, j=1}^n \in \mathcal{C}_F$ for all scalars $c_1, \ldots c_n,$ where $n = |F|.$

For a nonempty set $X,$ a function $g: X \times X \to \mathbb C$ is said to be self-adjoint on $X$ if $g(z,w)=\overline{g(w,z)}$ for all $z, w \in X.$
The preceding discussion is used to prove the following result (cf. \cite[Theorem 11.5]{AM2002}), which characterizes self-adjoint functions on $\mathbb E$  whose product with every admissible kernel is positive semi-definite.
\begin{theorem}\label{keytheorem}
    Let $g: \mathbb E \times \mathbb E \to \mathbb C$ be a self-adjoint function. Then   $gk \succcurlyeq 0$ for all $k \in \AKE$ if and only if there exist $\Gamma \in \CDE$ and $\Delta \in \PKE$ such that for all $z,w \in \mathbb E,$
    \beq \label{keyequation}
    g(z,w)=\Gamma(z, w)(1- E(z)\overline{E(w)})+ (1-z^{(2)}\overline{w}^{(2)})\Delta(z,w).\eeq
\end{theorem}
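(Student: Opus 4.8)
The plan is to follow the Hahn–Banach / cone-separation strategy of \cite[Theorem 11.5]{AM2002}, adapted to the two-term admissibility condition coming from the family $\{\psi_\alpha\}_{\alpha\in\overline{\mathbb D}}$ together with the single factor $1-z^{(2)}\overline w^{(2)}$. Fix an arbitrary finite subset $F=\{z_1,\dots,z_n\}\subseteq\mathbb E$. On the real vector space $\mathcal H_n$ of self-adjoint $n\times n$ matrices, consider the cone $\mathcal C_F$ consisting of all matrices of the form
\[
\Big(\!\big( \Gamma(z_i,z_j)(1-E(z_i)\overline{E(z_j)}) + (1-z_i^{(2)}\overline{z_j}^{(2)})\Delta(z_i,z_j)\big)\!\Big)_{i,j=1}^n,
\]
where $\Gamma$ ranges over $C(\overline{\mathbb D})_F^+$ and $\Delta$ over $\mathbb C_F^+$. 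This is a convex cone; I would first check it is closed, using a normalization/compactness argument (the $\psi_\alpha$ and $z^{(2)}$ values at the finitely many points of $F$ lie in the open disc, so $1-|E(z_i)|^2$ and $1-|z_i^{(2)}|^2$ are bounded below, which lets one bound the "size" of $\Gamma,\Delta$ in a representing sequence — this is where a lemma like Lemma \ref{onfiniteset} or Lemma \ref{nonemptyinterior} should enter). The goal is to show the matrix $G_F:=((g(z_i,z_j)))$ lies in $\mathcal C_F$; then a standard gluing/projective-limit argument over all finite $F$, exactly as in \cite{AM2002}, upgrades the finite-dimensional conclusion to the desired global representation \eqref{keyequation} on all of $\mathbb E\times\mathbb E$.

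Suppose toward a contradiction that $G_F\notin\mathcal C_F$. Since $\mathcal C_F$ is a closed convex cone in the finite-dimensional space $\mathcal H_n$, by the Hahn–Banach separation theorem there is a linear functional on $\mathcal H_n$, represented via the trace pairing by some self-adjoint matrix $B=((b_{ij}))$, such that $\langle B, M\rangle=\sum_{i,j} \overline{b_{ij}} M_{ij}\ge 0$ for all $M\in\mathcal C_F$ while $\langle B,G_F\rangle<0$. The next step is to decode what the condition $\langle B,M\rangle\ge0$ for all $M\in\mathcal C_F$ says about $B$. Testing against rank-one $\Gamma$'s of the form $c\mapsto \varphi(c)(\alpha_0)$ for point evaluations at $\alpha_0\in\overline{\mathbb D}$ (so $\Gamma(z_i,z_j)(f\overline g)=\overline{u_i}u_j f(\alpha_0)\overline{g(\alpha_0)}$) forces $((\,\overline{b_{ij}}(1-\psi_{\alpha_0}(z_i)\overline{\psi_{\alpha_0}(z_j)})\,))\ge 0$ for every $\alpha_0$, and testing against rank-one $\Delta$'s forces $((\,\overline{b_{ij}}(1-z_i^{(2)}\overline{z_j}^{(2)})\,))\ge 0$. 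In other words, $\overline B$ (entrywise) is precisely a matrix $((k(z_i,z_j)))$ for some admissible kernel $k$ on the finite set $F$ — modulo a density/extension remark that a positive semi-definite matrix on $F$ satisfying these two Schur-positivity constraints is the restriction of a genuine admissible kernel on $\mathbb E$ (this extension is itself a small Hahn–Banach or Kre\u\i n-type argument, or can be handled by restricting the hypothesis to finite sets from the start).

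Given such a $k\in\AKE$, the hypothesis that the Schur product of $g$ with $k$ is positive semi-definite says exactly that $((g(z_i,z_j)k(z_i,z_j)))\ge 0$, i.e. $\sum_{i,j}\overline{b_{ij}}\,g(z_i,z_j)=\langle \overline{B}\text{-as-kernel}, G_F\rangle\ge 0$ (matching indices carefully with the conjugation convention), contradicting $\langle B,G_F\rangle<0$. Hence $G_F\in\mathcal C_F$. I expect the \textbf{main obstacle} to be two intertwined technical points: (1) proving $\mathcal C_F$ is \emph{closed}, since $C(\overline{\mathbb D})_F^+$ is infinite-dimensional and one must control the $*$-representation / Hilbert-space dimension appearing in Proposition \ref{decompositionresult} when passing to limits — this is the role of the preparatory lemmas, and the openness of $\mathbb E$ (so that $\|E(z_i)\|_\infty<1$ strictly) is what saves the day; and (2) the passage from a separating matrix $B$ to an honest admissible kernel $k\in\AKE$, i.e. the extension of a finitely-supported admissible "pre-kernel" to all of $\mathbb E$, together with keeping the complex-conjugation bookkeeping in the trace pairing consistent so that the sign of $\langle B,G_F\rangle$ really does contradict Schur-positivity. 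Once these are in place, the globalization over increasing finite $F$ via a compactness/normal-families argument on the representing data $(\Gamma,\Delta)$ is routine and follows \cite[Theorem 11.5]{AM2002} verbatim.
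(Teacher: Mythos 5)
Your proposal follows essentially the same route as the paper: separate $G_F$ from the closed cone $\mathcal{C}_F$ by a self-adjoint matrix via Hahn--Banach, decode the positivity of the separating functional on $\mathcal{C}_F$ (tested against rank-one choices of $\Gamma$ and $\Delta$) as admissibility of the transposed separating matrix, derive a contradiction from the Schur-positivity hypothesis, and then globalize over finite subsets by an inverse-limit compactness argument, with the closedness of $\mathcal{C}_F$ established exactly as you sketch. The one step you flag as delicate --- upgrading the finitely supported admissible pre-kernel to a genuine element of $\AKE$ --- is resolved in the paper more simply than you anticipate: extend by zero off $F\times F$ (which preserves weak admissibility), add $\epsilon k_1$ for any fixed admissible kernel $k_1$, apply the hypothesis, and let $\epsilon\to 0$.
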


\begin{proof}  
Let $F$ be any finite subset of $\mathbb E.$
First, we show that \eqref{keyequation} holds for every $z, w \in F$.
\textbf{Claim:} $[g(z,w)]_{z,w\in F} \in \mathcal{C}_F$.
 Suppose $G=[g(z,w)]_{z,w\in F} \notin \mathcal{C}_F$. Since $\mathcal{C}_F$ is a closed cone with nonempty interior, by an application of Hahn-Banach separation theorem (see \cite[Theorem 3.4]{RD1991}), 
 there exists a linear functional $L$ on the space of $|F| \times |F|$ matrices such that $L(G)<0$ and $L(M) \geq 0$ for all $M\in \mathcal{C}_F.$ The functional $L$ can be chosen such that for all self-adjoint matrices $A,$ $L(A)=tr(AC)$, where $C$ is a fixed self-adjoint matrix.

Let $F = \{z_1, \ldots, z_n\}$. For any choice of scalars $c_1, \ldots, c_n$, we have
$$
 \sum_{i,j=1}^n \bar{c}_i c_j C_{j,i} = L(M) \geq 0,
$$
where $M = [\bar{c}_i c_j]_{i,j=1}^n \in \mathcal{C}_F$. Since every principal submatrix of a positive semi-definite matrix is itself positive semi-definite, it follows that the function $C^t : F \times F \to \mathbb{C}$, defined by $C^t(z_i, z_j) = C_{j,i}$, is positive semi-definite. Hence, $C^t$ is a weak kernel on $F$.
Moreover, by using $L(M) \Ge 0$ on $\mathcal{C}_F$ to the matrices of the form
$$
M = \left[\bar{c}_i (1 - z_i^{(2)} \overline{z}_j^{(2)}) c_j\right]_{i,j=1}^n \quad \text{and} \quad M_\alpha = \left[\bar{c}_i c_j (1 - \psi_\alpha(z_i) \overline{\psi_\alpha(z_j)})\right]_{i,j=1}^n, \quad \alpha \in \overline {\mathbb{D}},
$$
we conclude that $C^t$ is a weakly admissible kernel on $F$.
Now we extend $C^t$ to a weakly admissible kernel on $\mathbb E$ by defining $C^t(z, w)=0$ when $(z, w) \notin F \times F.$
For $\epsilon>0,$ $\epsilon k+ C^t \in \AKE$ for any $k \in \AKE.$ Since $g  k' \succcurlyeq 0$ for each $k' \in \AKE,$ $g (\epsilon k+ C^t) \succcurlyeq 0$ on $\mathbb E$ and consequently $g (\epsilon k+ C^t) \succcurlyeq 0$ on $F$ for any $\epsilon>0.$ Therefore, $g  C^t \succcurlyeq 0$ on $F.$ In particular, $\sum_{i,j=1}^n g(z_i,z_j) C^t(z_i,z_j) \geq 0.$ This yields that 
$$L(G)=tr(GC)=\sum_{i,j=1}^n G_{i,j} C_{j,i}=\sum_{i,j=1}^n  g(z_i,z_j) C^t(z_i,z_j) \geq 0.$$ This is a contradiction to the fact that $L(G)<0.$ Hence $[g(z,w)]_{z,w\in F} \in \mathcal{C}_F$. Therefore, there exist $\Gamma \in C(\overline{\mathbb D})_F^+$ and $\Delta \in \mathbb C_F^+$ such that for all $z,w\in F,$ $$g(z,w)=\Gamma(z,w)(1-E(z)\overline{E(w)})+ (1-z^{(2)}\overline{w}^{(2)})\Delta(z,w).$$

For any subset $F\subseteq \mathbb E$, consider the product topology on $C(\overline{\mathbb D})_F^+ \times \mathbb C_F^+$ induced by the weak-$*$ topology of $C(\overline{\mathbb D})_F^+\approx C(\overline{\mathbb D})^* \otimes \mathbb C^{|F| \times |F|}$ and the topology of $\mathbb C_F^+$. Define
$$ Y_F:=\left\{(\Gamma_F, \Delta_F) \in C(\overline{\mathbb D})_F^
    + \times \mathbb C_F^+: g(z,w) =\;\;\substack{\Gamma_F (z,w)\left(1-E(z)\overline{E(w)}\right)\\
 +\left(1-z^{(2)}\overline{w}^{(2)}\right)\Delta_F(z,w)}, \; z,w\in F\right\}.$$
A routine verification shows that the set $Y_F$ is compact. Let $\mathcal{A}$ be the directed set consisting of all finite subsets $F$ of $\mathbb E$ with the partial order $F_1 \leq F_2$ if $F_1 \subseteq F_2$ for any $F_1,F_2 \in \mathcal{A}$. Define $\pi_{F_1}^{F_2}: Y_{F_2} \to Y_{F_1}$ by $$\pi_{F_1}^{F_2} ((\Gamma_{F_2},\Delta_{F_2}))=(\Gamma_{F_2}|_{_{F_1}},\Delta_{F_2}|_{_{F_1}}).$$
Therefore, the triple $(Y_{F},\pi_{G}^{F},\mathcal{A})$ is an inverse limit of nonempty compact spaces, where $F, G \in \mathcal{A}.$
We now complete the proof by using Kurosh's theorem \cite[page 75]{AP1990}. 
\end{proof}
Recall that the approximate point spectrum of a commuting tuple $T=(T_1,\ldots,T_n)$ on $\mathfrak{H}$ is defined as the set of points $\lambda = (\lambda_1, \ldots, \lambda_n)\in \mathbb C^n$ such that there exists a sequence $\{x_m\} \subset \mathfrak{H}$ with $\|x_m\| = 1$ and $(T_j - \lambda_j)x_m \to 0$ as $m \to \infty$ for every $j = 1, \ldots, n$. It is shown in \cite{SZ1974} that $\sigma(T)$ is contained in the polynomially convex hull of $\sigma_{\pi}(T)$.

In view of Definition \ref{Schur-Agler class}, we show that $\sigma(T)\subseteq \mathbb E$ for any $T\in \mathcal{M}.$ This ensures that the functional calculus $f(T)$ is defined for any $f \in \mathrm{Hol}(\mathbb E).$

\begin{lemma}\label{spectrum}
    If $T \in \mathcal M$ then $\sigma(T)$ is contained in $\mathbb E.$
\end{lemma}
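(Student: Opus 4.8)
The plan is to use the Słodkowski--Żelazko result just recalled: it suffices to show that the approximate point spectrum $\sigma_\pi(T)$ is contained in $\mathbb E$ and that $\mathbb E$ is polynomially convex. Polynomial convexity of $\mathbb E$ is known from \cite{AWY2007}, so I would cite it; the real work is to locate $\sigma_\pi(T)$. So first I would fix $T=(T_1,T_2,T_3)\in\mathcal M$ and a point $\lambda=(\lambda^{(1)},\lambda^{(2)},\lambda^{(3)})\in\sigma_\pi(T)$, with a unit-norm sequence $\{x_k\}$ such that $(T_j-\lambda^{(j)})x_k\to 0$ for $j=1,2,3$.

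Next I would exploit the two defining inequalities of $\mathcal M$ evaluated along the sequence $\{x_k\}$. From $\|T_2\|<1$ we immediately get $|\lambda^{(2)}|\le\|T_2\|<1$, since $\langle T_2 x_k,x_k\rangle\to\lambda^{(2)}$ (or more simply $\|T_2 x_k\|\to|\lambda^{(2)}|$). For the second family of inequalities, fix $\alpha\in\overline{\mathbb D}$ and write $\psi_\alpha(T)=(\alpha T_3-T_1)(\alpha T_2-I)^{-1}$. Note $\alpha T_2-I$ is invertible because $\|\alpha T_2\|\le\|T_2\|<1$. Set $y_k=(\alpha T_2-I)^{-1}x_k$; since the inverse is bounded below, $\|y_k\|$ is bounded above and below away from $0$. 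I would check that $(\alpha T_2-I)y_k=x_k$ and that $(\alpha T_3-T_1)y_k=\psi_\alpha(T)x_k$. Using the commutativity of $T_1,T_2,T_3$ and the fact that $(T_j-\lambda^{(j)})x_k\to 0$, I would show $(\alpha T_2-I)y_k\to$ behaves like multiplication by $(\alpha\lambda^{(2)}-1)$ in the sense that $\big((\alpha T_2-I)-(\alpha\lambda^{(2)}-1)I\big)y_k\to 0$, hence $\big(\alpha\lambda^{(2)}-1\big)y_k - x_k\to 0$; similarly $(\alpha T_3-T_1)y_k-(\alpha\lambda^{(3)}-\lambda^{(1)})y_k\to 0$. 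Combining, $\psi_\alpha(T)x_k - \dfrac{\alpha\lambda^{(3)}-\lambda^{(1)}}{\alpha\lambda^{(2)}-1}\,x_k\to 0$, and since $\|\psi_\alpha(T)\|<1$ this forces $|\psi_\alpha(\lambda)|=\Big|\dfrac{\alpha\lambda^{(3)}-\lambda^{(1)}}{\alpha\lambda^{(2)}-1}\Big|\le\|\psi_\alpha(T)\|<1$ for every $\alpha\in\overline{\mathbb D}$.

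Then I would invoke the characterization \eqref{domain description}: $\lambda\in\mathbb E$ if and only if $|\lambda^{(2)}|<1$ and $|\psi_\alpha(\lambda)|<1$ for all $\alpha\in\overline{\mathbb D}$. Both conditions have just been verified, so $\lambda\in\mathbb E$, giving $\sigma_\pi(T)\subseteq\mathbb E$. Finally, since $\sigma(T)$ is contained in the polynomially convex hull of $\sigma_\pi(T)$ by \cite{SZ1974}, and $\mathbb E$ is polynomially convex, we conclude $\sigma(T)\subseteq\mathbb E$.

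The main obstacle I anticipate is the careful bookkeeping in the perturbation argument for $\psi_\alpha$: one must justify that $y_k=(\alpha T_2-I)^{-1}x_k$ can be treated as an ``almost eigenvector'' with the same asymptotics, which requires using boundedness of $(\alpha T_2-I)^{-1}$ together with commutativity to push the operators $T_j-\lambda^{(j)}$ past the resolvent without losing the convergence to zero. A clean way to organize this is the algebraic identity $(\alpha T_3-T_1)(\alpha T_2-I)^{-1}-\psi_\alpha(\lambda)I = (\alpha T_2-I)^{-1}\big[(\alpha T_3-T_1)-\psi_\alpha(\lambda)(\alpha T_2-I)\big]$, and then observing that the bracketed operator is a linear combination of the $T_j-\lambda^{(j)}$ (with bounded coefficients), hence annihilates $x_k$ in the limit; boundedness of the leading resolvent finishes it. A minor point to keep in mind is the uniformity in $\alpha$ is not needed — we only need the bound for each fixed $\alpha\in\overline{\mathbb D}$, which is exactly what \eqref{domain description} requires.
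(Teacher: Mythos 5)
Your proposal is correct and follows essentially the same route as the paper: bound the approximate point spectrum using the two defining inequalities of $\mathcal M$, then invoke the S\l odkowski--\.Zelazko result together with the polynomial convexity of $\mathbb E$. The only difference is in the $\psi_\alpha$ step, where the paper avoids your resolvent-perturbation bookkeeping by simply writing $\|(\alpha T_3-T_1)x_k\|\leq\|\psi_\alpha(T)\|\,\|(\alpha T_2-I)x_k\|$ and letting $k\to\infty$, so that both norms converge to $|\alpha\lambda^{(3)}-\lambda^{(1)}|$ and $|\alpha\lambda^{(2)}-1|$ respectively.
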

\begin{proof}
    Let $z=(z^{(1)},z^{(2)},z^{(3)}) \in \sigma_{\pi}(T).$ Then there exists a sequence $\{x_m\}$ of unit vectors in $\mathfrak H$ such that for each $j=1,2,3$, $(T_j-z^{(j)})x_m$ goes to $0$ as $m$ tends to $\infty.$ It follows from $\|T_2\| <1$ that $|z^{(2)}|<1.$ Since $\|(\alpha T_3-T_1)(\alpha T_2-I)^{-1}\|<1,$ for each $m,$ we have 
    $$\|(\alpha T_3-T_1)x_m\|^2 < \|(\alpha T_2-I)x_m\|^2.$$ Now taking $m \rightarrow \infty$ yields that $|\psi_\alpha(z)|<1.$ Thus $\sigma_\pi(T) \subset \mathbb E.$ Since $\mathbb E$ is polynomially convex, we have $\sigma(T) \subset \mathbb E.$
\end{proof}
 
We now define a unitary colligation in the context of tetrablock (cf. \cite{BT1998}).
  \begin{definition} \label{unitarycolligation}
       A function $f:\mathbb E \to \mathbb C$ is said to be associated to a {\it unitary colligation}, if there are Hilbert spaces $\mathfrak{H}_1,\; \mathfrak{H}_2,$ a unital $*$-representation $\rho:C(\overline{\mathbb D}) \to B(\mathfrak{H}_1)$ and a unitary $V$ given by  $$V=\begin{bmatrix}
A & B \\
C & D
\end{bmatrix}:\mathbb C \oplus \mathfrak{H} \rightarrow \mathbb C \oplus \mathfrak{H},$$
where $\mathfrak{H}=\mathfrak{H}_1\oplus \mathfrak{H}_2,$ and $$f(z)=A +B \begin{bmatrix}
    \rho(E(z)) & 0\\ 0 & z^{(2)}I_{\mathfrak{H}_2}
\end{bmatrix}\left(I_\mathfrak{H}-D\begin{bmatrix}
    \rho(E(z)) & 0\\ 0 & z^{(2)}I_{\mathfrak{H}_2}
\end{bmatrix}\right)^{-1}C.$$
  \end{definition}
 \begin{remark} \label{Operator D contraction} Here are a few quick observations: 
 \begin{itemize}
     \item [$\rm{(i)}$] Since $V$ is unitary, the operator $D$ is a contraction. 
     \item [$\rm{(ii)}$] For all $z \in \mathbb E,$ let $$X(z):=\begin{bmatrix}
    \rho(E(z)) & 0\\
    0 & z^{(2)}I_{\mathfrak{H}_2}
\end{bmatrix} \in \mathcal{B}(\mathfrak{H}).$$ 
Since $\rho$ is a unital $*$-representation and $\|E(z)\|_\infty < 1$ for all $z \in \mathbb{E}$, we obtain $\|\rho(E(z))\| < 1$ for all $z \in \mathbb{E}$. Therefore, it follows that $\|X(z)\| < 1$ for all $z \in \mathbb{E}$.
\end{itemize}
 \end{remark} 

Let $\mathcal U \mathcal C(\mathbb E)$ denote the collection of functions $f:\mathbb E \to \mathbb C$ which have an associated unitary colligation. We now establish some preliminary properties of elements in $\UCE$.

\begin{lemma}\label{properties of SA}  The following statements holds true.
    \begin{itemize}
    \item [$\rm {(i)}$] $\psi_\alpha \in \UCE$ for all $\alpha \in \overline{\mathbb D}.$
        \item [$\rm {(ii)}$] $\UCE \subseteq \mathcal{S}(\mathbb E).$
        \item [$\rm {(iii)}$]$f g \in \UCE$ for all $f, g \in \UCE.$
         \item [$\rm {(iv)}$] For a tuple $T\in \mathcal{M}$ defined on a Hilbert space $\mathfrak K, $ let $\pi: \UCE \to \mathcal {B}(\mathfrak{K})$ be a map given by $\pi(f)=f(T).$  Then $\pi (fg)=\pi(f)\pi(g).$ Moreover, if $\{f_n\}$ is a sequence in $\UCE$ which converges pointwise to $f\in \UCE$, then $\pi(f_n)$ converges to $\pi(f)$ in the weak operator topology. 
    \end{itemize}
\end{lemma}

\begin{proof} The proofs of (i), (ii), and (iii) follow directly from the Definition \ref{unitarycolligation}.
We now provide the proof of part $\rm{(iv)}.$
Let $(T,\mathfrak{K}) \in \mathcal{M}.$ Then by Lemma \ref{spectrum}, $\sigma(T) \subset \mathbb E$. Using the holomorphic functional calculus of $T,$ we define $\pi : \UCE \to \mathcal{B}(\mathfrak{K})$ as follows:  $$\pi(f)=f(T)=\frac{1}{(2\pi i)^3} \int_{\partial\Omega} M_T(z) f(z) dz,\; f\in \UCE$$ where $\Omega \subseteq \mathbb C^3$ is an open set containing $\sigma(T)$ with $C^1$-boundary, $\overline{\Omega} \subseteq \mathbb E$ and $M_T(z)$ is the Martinelli kernel corresponding to the tuple $T$ (see \cite[Chapter III, Proposition 11.1]{VF1982}). It is easy to verify that $\pi(fg)=\pi(f)\pi(g)$ for all $f, g\in \UCE.$
    Let $\{f_n\} \in \UCE$ be a sequence that converges pointwise to $f\in \UCE.$ For each $x,y\in \mathfrak{K}$, consider $d\mu(z):=\inp{M_T(z)x}{y}dz$ as a measure on the Borel subsets of $\partial\Omega$. 
   Since $\{f_n\}$ is uniformly bounded (see $\rm (ii)$), by dominated convergence theorem, we get \beqn \lim_{n \to \infty} \inp{f_n(T)x}{y} =\frac{1}{(2\pi i)^3} \int_{\partial\Omega}  \lim_{n\to \infty}f_n(z)d\mu(z)= \inp{f(T)x}{y}.\eeqn 
    This completes the proof.
    \end{proof}

A unital $*$-representation $\rho:C(\overline{\mathbb D}) \to B(\mathfrak{H})$ is called a {\it simple} representation if there are $\alpha_1, \ldots, \alpha_N \in \overline{\mathbb D}$ and orthogonal projections $P_1,\ldots,P_N\in \mathcal{B}(\mathfrak{H})$ with $\sum_{j=1}^N P_j=I_{\mathfrak{H}}$ such that $$\rho(h)=\sum_{j=1}^N P_j h(\alpha_j)\; \text{for all } h \in C(\overline{\mathbb D}).$$ 
In particular, $\rho(E(z))=\sum_{j=1}^N P_j \psi_{\alpha_j}(z),$ for any $z \in \mathbb E.$ Thus, the map $z \to \rho(E(z))$ is a $B(\mathfrak{H})$-valued holomorphic function on $\mathbb E$. 
Let $T \in \mathcal M$ be defined on $\mathfrak{K}.$ Then the holomorphic functional calculus is given by 
 $$\widetilde{\pi}(\rho(E)):=\rho(E(T))=\sum_{j=1}^N P_j \otimes \psi_{\alpha_j}(T)\in \mathcal B(\mathfrak H \otimes \mathfrak{K}).$$

The following result establishes a special case of the implication $(iv) \implies (i)$ in Theorem \ref{realization theorem} (cf. \cite[Lemma 3.1]{DM2007}).

\begin{lemma}\label{simplerep}
    Let $f \in \UCE$ and $\rho$ be the associated unital $*$-representation. If $\rho$ is a simple representation, then $f(T)$ is a contraction for any $T\in \mathcal{M}$.
\end{lemma}

\begin{proof}
Let $T=(T_1, T_2, T_3)\in \mathcal{M}$ be defined on a Hilbert space $\mathfrak{K}.$
Let $D$ and $X(z)$ be as given in the definition of $f\in \UCE$ and in Remark \ref{Operator D contraction}(ii), respectively. Since the associated $*$-representation $\rho: C(\overline{\mathbb D}) \to B(\mathfrak{H}_1)$ is a simple representation, following the notation of the preceding discussion, we get $$ \widetilde{\pi}(X)=\begin{bmatrix}
     \rho(E(T)) & 0\\ 0 & I_{\mathfrak{H}_2} \otimes T_2 
 \end{bmatrix}, \mbox{~and~} \widetilde{\pi}(DX)=(D\otimes I_{\mathfrak{K}})\widetilde{\pi}(X).$$
 A routine verification shows that
\beq \label{action of pi}
\widetilde{\pi}((DX)^n)=(\widetilde{\pi} (DX))^n \notag \mbox{ and } \\  \widetilde{\pi}(X((DX)^n))=\widetilde{\pi}(X) \widetilde{\pi}((D X)^n)\; \text{for all } n \in \mathbb N.
\eeq 
 For ease of notation, denote $\psi_j=\psi_{\alpha_j},$ $j=1, \ldots,  N.$ 
  For $h \in \mathfrak{H}_1$, $g\in \mathfrak{K}$, we have
\begin{align*}
    \left\|\widetilde{\pi} (\rho(E)) (h\otimes g)\right\|^2
&= \sum_{j=1}^N \inp{P_j h}{h} \inp{\psi_j(T)^* \psi_j (T)g}{g}\\
&< \sum_{j=1}^N \inp{P_j h}{h} \|g\|^2, \text{ as } \|\psi_j(T)\| <1\\
&=\|h\|^2\|g\|^2.\\
\end{align*}

Therefore, $\|\widetilde{\pi}(\rho(E))\| <1.$  Also, since $\|T_2\| <1$, 
therefore $ \label{pifmaction} \|\widetilde{\pi}(X)\|<1.$

Define a sequence $\{f_m\}$ of holomorphic functions by \beqn 
 f_m(z) = A +B X(z)\left (\sum_{n=1}^m (DX(z))^n\right) C \mbox{ for all } m\in \mathbb N.\eeqn 
Since $\|D X(z)\| <1$, the sequence $\{f_m\}$ converges to $f$ pointwise on $\mathbb E$.
  Now by \eqref{action of pi}, we get
  \beqn 
  \pi(f_m)=  A\otimes I_{\mathfrak{K}} +(B\otimes I_{\mathfrak{K}}) \widetilde{\pi} (X) \left(\sum_{n=1}^m (\widetilde{\pi}(DX))^n\right) (C\otimes I_{\mathfrak{K}}).
  \eeqn 
  
  Since $\|\widetilde{\pi}(X)\| <1, \; \|\widetilde{\pi}(DX)\|=\|(D\otimes I_{\mathfrak{K}})\widetilde{\pi}(X)\| <1.$ Therefore, $\{\pi(f_m)\}$ converges in the weak operator topology to $$A\otimes I_{\mathfrak{K}} +(B\otimes I_{\mathfrak{K}}) \widetilde{\pi} (X) ((I_{\mathfrak{H}}\otimes I_{\mathfrak{K}})-\widetilde{\pi}(DX))^{-1} (C\otimes I_{\mathfrak{K}})=\pi(f).$$ 
The preceding equality follows directly from Lemma \ref{properties of SA}(iv).

Consider $A'=A \otimes I_{\mathfrak{K}}, B'=B\otimes I_{\mathfrak{K}}, C'=C\otimes I_{\mathfrak{K}}$, and $D'=D \otimes I_{\mathfrak{K}}.$ 
Then
\beqn I_{\mathfrak{K}}-\pi(f)^* \pi(f)= {C'}^* {(I_{\mathfrak{K}}-D' \widetilde{\pi}(X))^{-1}}^* (I_{\mathfrak{K}}-\widetilde{\pi}(X)^* \widetilde{\pi}(X)) (I_{\mathfrak{K}}-D' \widetilde{\pi}(X))^{-1} C'.\eeqn
Since $\|\widetilde{\pi}(X)\| <1$, this yields $I_{\mathfrak{K}}-f(T)^* f(T) \geq 0.$ This completes the proof.
 \end{proof}

 \begin{lemma}\label{hat}
        Let $f \in A(\overline{\mathbb E}) \cap \SAE.$ Define $\hat{f}:\overline{\mathbb E}\rightarrow\mathbb C$ by $\hat{f}(z)=\overline{f(\bar z)}.$ Then $\hat{f} \in A(\overline{\mathbb E}) \cap \SAE.$
    \end{lemma}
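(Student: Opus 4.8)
The plan is to verify the two required properties of $\hat{f}$ separately: first that $\hat{f}\in A(\overline{\mathbb E})$, and second that $\hat{f}\in\SAE$.

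\textit{Membership in $A(\overline{\mathbb E})$.} First I would observe that $\mathbb E$ is invariant under the conjugation map $z\mapsto\bar z$; this follows from the defining condition \eqref{tetra-new}, since $1-z^{(1)}\alpha-z^{(2)}\beta+z^{(3)}\alpha\beta\neq 0$ for all $\alpha,\beta\in\overline{\mathbb D}$ is equivalent, upon conjugating and relabelling $\alpha,\beta$ (using that $\overline{\mathbb D}$ is closed under conjugation), to $1-\bar z^{(1)}\alpha-\bar z^{(2)}\beta+\bar z^{(3)}\alpha\beta\neq 0$ for all $\alpha,\beta\in\overline{\mathbb D}$. The same argument applied to the closure (or a continuity/limiting argument) shows $\overline{\mathbb E}$ is conjugation-invariant as well. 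Hence $\hat f$ is well-defined on $\overline{\mathbb E}$. That $\hat f$ is continuous on $\overline{\mathbb E}$ is immediate from continuity of $f$ and of conjugation. That $\hat f$ is holomorphic on $\mathbb E$ is the standard fact that $z\mapsto\overline{f(\bar z)}$ is holomorphic whenever $f$ is: one checks the Cauchy–Riemann equations, or notes that the power series of $f$ about $\bar z_0$ gives a power series of $\hat f$ about $z_0$ with conjugated coefficients. Therefore $\hat f\in A(\overline{\mathbb E})$.

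\textit{Membership in $\SAE$.} By Definition \ref{Schur-Agler class} it suffices to show $\|\hat f(T)\|\leq 1$ for every $T\in\mathcal M$. Fix $T=(T_1,T_2,T_3)\in\mathcal M$ on $\mathfrak K$. The key point is that, since $f\in A(\overline{\mathbb E})$ and $\sigma(T)\subseteq\mathbb E$ by Lemma \ref{spectrum}, the functional calculus $\hat f(T)$ makes sense, and one has the identity $\hat f(T)=\overline{f(\bar T)}$ in the appropriate sense. More precisely, recall from the observations following the definition of $\mathcal M$ that $T^*=(T_1^*,T_2^*,T_3^*)\in\mathcal M$ whenever $T\in\mathcal M$. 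I claim that $\hat f(T)=f(T^*)^*$. To see this, note that for a polynomial $p(z)=\sum_\gamma a_\gamma z^\gamma$ one has $\hat p(z)=\sum_\gamma\overline{a_\gamma}z^\gamma$, and $\hat p(T)=\sum_\gamma\overline{a_\gamma}T^\gamma=\big(\sum_\gamma a_\gamma (T^*)^\gamma\big)^*=p(T^*)^*$; the general case follows by approximating $f$ uniformly on a neighbourhood of $\overline{\mathbb E}$ — or on $\sigma(T)\cup\sigma(T^*)$, both compact subsets of $\mathbb E$ — by polynomials and passing to the limit in the holomorphic functional calculus, which is continuous with respect to uniform convergence on neighbourhoods of the spectrum. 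Granting this, $\|\hat f(T)\|=\|f(T^*)^*\|=\|f(T^*)\|\leq 1$, where the last inequality holds because $T^*\in\mathcal M$ and $f\in\SAE$. Hence $\hat f\in\SAE$, and combining with the first part, $\hat f\in A(\overline{\mathbb E})\cap\SAE$.

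\textit{Main obstacle.} The routine parts (conjugation-invariance of $\mathbb E$ and $\overline{\mathbb E}$, holomorphy and continuity of $\hat f$) are straightforward. The one step deserving care is the identity $\hat f(T)=f(T^*)^*$ at the level of the holomorphic (Taylor) functional calculus rather than merely for polynomials: one must justify passing from polynomial approximation to the functional calculus, using that uniform approximation on a neighbourhood of the compact set $\sigma(T)$ forces convergence of $p(T)$ to $f(T)$, together with the fact that $p\mapsto\hat p$ and $A\mapsto A^*$ are both continuous (the latter isometric) on the relevant spaces. Once this identity is in hand, the bound $\|\hat f(T)\|\leq 1$ is immediate from $f\in\SAE$ and the already-noted closure of $\mathcal M$ under taking adjoints.
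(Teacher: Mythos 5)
Your proposal is correct and follows essentially the same route as the paper: polynomial approximation of $f$ (justified by polynomial convexity of $\overline{\mathbb E}$, i.e.\ Oka--Weil), the identity $\hat p(T)=p(T^*)^*$ for polynomials, passage to the limit in the functional calculus, and the closure of $\mathcal M$ under adjoints to conclude $\|\hat f(T)\|=\|f(T^*)\|\le 1$. The only cosmetic difference is that the paper passes to the limit in the weak operator topology via a dominated convergence argument in the Martinelli-integral functional calculus, whereas you argue via uniform convergence on a neighbourhood of the spectrum; both are adequate.
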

     \begin{proof}
        Let $f \in A(\overline{\mathbb E}) \cap \SAE.$ Since $\mathbb E$ is a polynomially convex domain, by Oka-Weil theorem, there exists a sequence $ \{p_k\}$ of polynomials converging to $f$ uniformly on $\overline{\mathbb E}$. Then the sequence $\{\hat{p}_k\}$ given by $\hat{p}_k(z)=\overline {p(\bar z)},$ converges to $\hat f$ uniformly. It follows that $\hat{f} \in A(\overline{\mathbb E}).$ Let $(T,\mathfrak{K}) \in \mathcal{M}.$ Arguing as in the proof of part $(iv)$ of Lemma \ref{properties of SA}, $\hat{p}_k(T)$ and $p_k(T)$  converges to $\hat{f}(T)$ and $f(T),$ respectively, in the weak operator topology. By Lemma \ref{spectrum}, $\sigma(T), \text{and } \sigma(T^*)$ are subsets of $\mathbb E.$ For any $x,y\in \mathfrak{K}$, 
        $$ |\inp{\hat{f}(T) x}{y}| =\lim_{k\to \infty} |\inp{\hat{p}_k(T)x}{y}|=\lim_{k\to \infty}|\inp{x}{p_k(T^*)y}|=|\inp{x}{f(T^*)y}|.$$
      Consequently, $\|\hat{f}(T)\| \leq 1$ for any $T\in \mathcal{M},$ that is, $\hat{f}\in \SAE.$
    \end{proof}
\section{Proof of the realization theorem} \label{S3}
In this section, we record the proof of Theorem \ref{realization theorem}.
\begin{proof}[Proof of Theorem \ref{realization theorem}:]
   
 $\mathrm{(i)} \implies \mathrm{(ii)}.$ We first establish the result for $f \in \SAE \cup A(\overline{\mathbb E}),$ and then use the radial technique to complete the proof.
 Let $k \in \AKE$ and $M_z=(M_{z^{(1)}}, M_{z^{(2)}}, M_{z^{(3)}})$ be the tuple of operators of multiplication by the coordinate functions $z^{(i)},\; i=1,2,3,$ on the reproducing kernel Hilbert space $H(k)$ determined by $k.$ Then each $M_{z^{(i)}}$ is bounded, in fact, $\|M_{z^{(2)}}\| \leq 1$ and $\|\psi_\alpha(M_z)\|\leq 1$ for all $\alpha \in \overline{\mathbb D}.$

For $n \in \mathbb N$ and $z_1, \ldots, z_n \in \mathbb E,$ let $H_n$ denote the finite dimensional Hilbert space spanned by $\{k(\cdot, z_1), \ldots, k(\cdot, z_n)\}$.  Consider the tuple $S=(S
_1,S_2,S_3)$ defined on $H_n$ as follows: 
    \beqn
S_j^*k( \cdot, z_l)=\bar {z}_l^{(j)}k( \cdot, z_l) \quad l=1, \ldots n,~ j=1,2,3.
    \eeqn
It is easy to see that $H_n$ is jointly invariant under $M_z^*$ and $M^*_{z^{(j)}}|_{H_n}=S_j^*$ for all $j=1,2,3$. Furthermore,  $\|S_2^*\| \leq 1$ and $\|\psi_\alpha(S^*)\| \leq 1$ for all $\alpha \in \overline{\mathbb D}.$ 
For any $0<t<1,$ $\alpha \in \overline{\mathbb D}$ and for any scalars $c_1, \ldots, c_n,$ we have 
\beq \label{psi sdott}
\|(t^2\alpha S_3^*-tS_1^*)(\sum_{j=1}^n c_jk(\cdot, z_j))\|
\notag&\leq & t\|(t\alpha  M_{z^{(2)}}^*-I)(\sum_{j=1}^n c_jk(\cdot, z_j))\|\\
&=& t\|(t\alpha  S_2^*-I)(\sum_{j=1}^n c_jk(\cdot, z_j))\|.
\eeq
The inequality above follows from the fact that $\|\psi_{t\alpha}(M_z^*)\| \leq 1.$
Since $\|t  S_2^*\| <1,$ it follows from \eqref{psi sdott} that $t \cdot S^*=(tS_1^*, tS_2^*, t^2 S_3^*) \in \mathcal{M}$ for any $0 <t <1.$ 

Let $f\in \SAE \cap A(\overline{\mathbb E})$ and $\hat{f}$ be the function defined as $\hat{f}(z)=\overline{f(\bar{z})},$ then by Lemma \ref{hat}, $\hat{f}\in \SAE \cap A(\overline{\mathbb E}).$  Thus, there exists a sequence of polynomials $\{p_m\}$ such that $\hat{p}_m$ converges to $\hat{f}$ uniformly on $\overline {\mathbb E}$. Therefore,
\beqn \left \|\hat{f}(t \cdot S^*)\left(\sum_{j=1}^n c_j k(\cdot,z_j)\right)\right\|&=& \lim_{m \to \infty} \left \|\hat{p}_m(t \cdot S^*)\left(\sum_{j=1}^n c_j k(\cdot,z_j)\right)\right \|\\
&=&\lim_{m \to \infty} \left \|\left(\sum_{j=1}^n c_j\overline{p_m(t \cdot z_j)} k(\cdot,z_j)\right)\right \|\\
&=&\left \|\left(\sum_{j=1}^n c_j \overline{f(t \cdot z_j)}k(\cdot,z_j)\right)\right\|.\eeqn
As $\hat{f}\in \SAE$, we have 

   $$ \left \|\left(\sum_{j=1}^n c_j \overline{f(t \cdot z_j)}k(\cdot,z_j)\right)\right\|^2 \leq \left 
   \|\sum_{j=1}^n c_j k(\cdot,z_j)\right \|^2.$$
   This shows that $\sum_{i,j=1}^n \bar{c}_i c_j (1-f(t \cdot z_i)\overline{f(t \cdot z_j)})k(z_i,z_j) \geq 0.$ By taking $t \to 1^-,$ we have $\rm{(ii)}$ for $f \in \SAE \cap A(\overline{\mathbb E}).$ 
Let $f\in \SAE$ and $0 <r<1$, consider the function $f_r:\overline{\mathbb E} \to \mathbb C$ given by $f_r(z)=f(r \cdot z)$. Then 
$f_r \in \SAE \cap A(\overline{\mathbb E})$. Consequently, $$\sum_{i,j=1}^n \bar{c}_i c_j(1-f_r(z_i) \overline{f_r(z_j)})k(z_i,z_j) \geq 0. $$
We conclude the proof by taking $r\to 1^-.$

$\mathrm{(ii)} \implies \mathrm{(iii)}.$ Consider the function $g$ on $\mathbb E \times \mathbb E$ given by $g(z,w)=1-f(z)\overline{f(w)}$ for all $z,w \in \mathbb E.$ Since $g$ is self-adjoint on $\mathbb E,$ an application of Theorem \ref{keytheorem} yields this implication.

$\mathrm{(iii)} \implies \mathrm{(iv)}.$  We use the idea of lurking isometry to prove this part (see \cite[Theorem 11.13]{AM2002}). This idea has also been used to prove similar types of results, such as \cite[Theorem 2.2]{DM2007}, \cite[Theorem 3.1]{AY2017}, \cite[Theorem 1.5]{BB2004}, and \cite[Theorem 5.2]{BS2022}. By hypothesis, there exist $\Gamma \in \CDE$ and $\Delta \in \PKE$ such that 
\beq \label{keyequation2} 1 - f(z) \overline{f (w)}=\Gamma(z, w)\Big(1- E(z)\overline{E(w)}\Big)+ (1-z^{(2)}\overline{w}^{(2)})\Delta(z,w), \quad z,w\in \mathbb E.
\eeq

Since $\Gamma \in \CDE,$  by Proposition \ref{decompositionresult}, there exist a Hilbert space $\mathfrak{H}_1$ and a function $L:\mathbb E \to \mathcal{B}(C(\overline{\mathbb D}),\mathfrak{H}_1)$ such that $\Gamma(z,w)(h_1\bar{h}_2)=\inp{L(z)(h_1)}{L(w)(h_2)}_{\mathfrak{H}_1}$ for all $z,w\in \mathbb E$ and $h_1,h_2 \in C(\overline{\mathbb D}).$ Moreover, there exists a unital $*$-representation $\rho: C(\overline{\mathbb D}) \to \mathcal{B}(\mathfrak{H}_1)$ such that $\rho(h_1)(L(z)h_2)=L(z)(h_1h_2).$

  On the other hand, as $\Delta \in \PKE$, by \cite[Theorem 2.53]{AM2002}, there exist a Hilbert space $\mathfrak{H}_2$ and a function $g:\mathbb E \to \mathfrak{H}_2$ such that $\Delta(z,w)=\inp{g(z)}{g(w)}_{\mathfrak{H}_2}.$ Then \eqref{keyequation2} becomes 
  \beqn
    1-f(z)\overline{f(w)}
    &=& \inp{L(z)1}{L(w)1}_{\mathfrak{H}_1}-\inp{L(z)(E(z))}{L(w)(E(w))}_{\mathfrak{H}_1}\\&+& \inp{g(z)}{g(w)}_{\mathfrak{H}_2}-\inp{z^{(2)}g(z)}{w^{(2)}g(w)}_{\mathfrak{H}_2}.
  \eeqn
  After rearranging terms, we get
  \beqn 1&+& \inp{\rho(E(z))(L(z)1)}{\rho(E(w))(L(w)1)}_{\mathfrak{H}_1}+ \inp{z^{(2)}g(z)}{w^{(2)}g(w)}_{\mathfrak{H}_2}\notag \\&=& f(z)\overline{f(w)} +\inp{L(z)1}{L(w)1}_{\mathfrak{H}_1}+\inp{g(z)}{g(w)}_{\mathfrak{H}_2}.\eeqn
  Equivalently, \beq \label{isometryeq} 1+\left \langle X(z)\left(\substack{L(z) 1\\ g(z)}\right), X(w)\left(\substack{L(w) 1\\ g(w)}\right) \right \rangle_{\mathfrak{H}}=f(z)\overline{f(w)}+\left \langle \left(\substack{L(z) 1\\ g(z)}\right), \left(\substack{L(w) 1\\ g(w)}\right) \right \rangle_{\mathfrak{H}},\eeq where $$X(z)= \begin{bmatrix}
    \rho(E(z))& 0\\ 0 & z^{(2)} I_{\mathfrak{H}_2}
 \end{bmatrix} \in \mathcal{B}(\mathfrak{H}),\; \mathfrak{H}=\mathfrak{H}_1 \oplus \mathfrak{H}_2.$$
 Consider the subspaces $\mathfrak{H}_d$ and $\mathfrak{H}_r$ of $\mathbb C\oplus \mathfrak{H}$ given by
 $$\mathfrak{H}_d=\operatorname{span} \left\{\left(\substack{1\\ \;\;\\ X(z)\left(\substack{L(z)1\\ g(z)}\right)}\right): z\in \mathbb E\right\},\;\; \mathfrak{H}_r=\operatorname{span}  \left\{\left( \substack{f(z)\\ \;\;\\\left(\substack{L(z)1\\ g(z)}\right) }\right): z\in \mathbb E\right\}.$$ Define a linear map $V:\mathfrak{H}_d \to \mathfrak{H}_r$ by $$V\left(\substack{1\\ \;\;\\ X(z)\left(\substack{L(z)1\\ g(z)}\right)}\right)= \left( \substack{f(z)\\ \;\;\\\left(\substack{L(z)1\\ g(z)}\right) }\right).$$
 By \eqref{isometryeq}, $V$ can be extended isometrically on $\overline{\mathfrak{H}_d}$. Now, adding an infinite-dimensional summand to $\mathfrak{H}$ if necessary, one can extend $V$ to an isometry from $\mathbb C \oplus \mathfrak{H} \to \mathbb C \oplus \mathfrak{H}$ (see \cite[Section 11.3]{AM2002}). 
 
 Let $\begin{bmatrix}
     A & B\\ C & D
 \end{bmatrix}$ be the decomposition of $V$ with respect to $\mathbb C \oplus \mathfrak{H}$. Then
\beq
    &\label{1st} A1 +B X(z)\left(\substack{L(z)1\\ g(z)}\right)= f(z), \\
    & \label{2nd} C1 + D X(z)\left(\substack{L(z)1\\ g(z)}\right)=\left(\substack{L(z)1\\ g(z)}\right).
\eeq
From \eqref{2nd} and Remark \ref{Operator D contraction}, we have $$\left(\substack{L(z)1\\ g(z)}\right)= (I_\mathfrak{H}-DX(z))^{-1}C1.$$ This combined with \eqref{1st} yields that $$f(z)=A+ B X(z)(I_\mathfrak{H}-DX(z))^{-1}C.$$
This completes the proof of $\mathrm{(iii)} \implies \mathrm{(iv)}.$

$\mathrm{(iv)} \implies \mathrm{(i)}:$ To prove this implication, we follow the idea of the proof of \cite[Proposition 3.2]{DM2007}.
Let $f \in \UCE.$ First we construct a net of functions $ f_\beta $ in $ \UCE$, each associated with a simple representation, such that $ f_\beta $ converges pointwise to $ f $ and then we complete the proof using Lemma~\ref{simplerep} and the weak continuity of $ \pi $.

 Consider a directed set $\mathcal{F}$ consisting of pairs $(F,\epsilon)$, where $F$ is a finite subset of $\mathbb E$ and $\epsilon >0,$ ordered by $(F,\epsilon) \leq (G,\delta)$ if $F\subseteq G$ and $ \epsilon \geq \delta.$ Let $\beta =(F,\epsilon) \in \mathcal{F}.$ By the compactness of $\{\psi_\alpha: \alpha \in \overline{\mathbb D}\}$, there exists a finite collection $\{\Delta_1^\beta, \ldots, \Delta_m^\beta\}$ of mutually disjoint nonempty sets covering $\{\psi_\alpha: \alpha \in \overline{\mathbb D}\}$ with the property that for each $j=1, \ldots, m,$
 \beq \label{disjointcollection}
|\psi'(z)-\psi''(z)|<\epsilon, \quad \psi',\psi'' \in \Delta_j^\beta,\; z\in F.
 \eeq
 Also, consider a collection $\{\psi_1^\beta,\ldots,\psi_m^\beta\}$ such that $\psi_i^\beta=\psi_{\alpha_i}$ for some $\alpha_i \in \overline{\mathbb D}$ and $\psi_i^\beta \in \Delta_i^\beta$ for $i=1, \ldots, m.$

Since $f\in \UCE,$ there is an asoociated unital $*$-representation  $\rho: C(\overline{\mathbb D}) \to \mathcal{B}(\mathfrak{H}_1)$ (see Definition \ref{unitarycolligation}).
Therefore, by \cite[Theorem 9.8]{CJ2000}, there exists a unique $\mathcal{B}(\mathfrak{H}_1)$-valued (spectral) measure $\mu$ on the Borel subsets of $\overline{\mathbb D}$  such that 
 $$\rho(h)=\int_{\overline{\mathbb D}} h(\alpha) d\mu(\alpha) \text{ for all } h \in C(\overline{\mathbb D}).$$
Define $\rho_{_\beta}:C(\overline{\mathbb D})\to \mathcal{B}(\mathfrak{H}_1)$ by 
$$\rho_{_\beta}(h)=\sum_{j=1}^m \mu(\Delta_j^\beta) h(\alpha_j) \text{ for all } h \in C(\overline{\mathbb D}).$$ 
Since $ \Delta_1^\beta, \ldots, \Delta_m^\beta $ is a partition of $\{\psi_\alpha: \alpha \in \overline{\mathbb D}\} \cong \overline{\mathbb D}$, it follows that the operators $ \mu(\Delta_j^\beta) $ are pairwise orthogonal projections for $ j = 1, \ldots, m $, and satisfy $ \sum_{j=1}^m \mu(\Delta_j^\beta) = I_{\mathcal{H}_1} $. Consequently, $\rho_{_\beta}$ is a simple representation. 
For any $z\in F$, we have \beqn \left\|\rho_{_\beta} (E(z))-\rho(E(z))\right\| &=& \left\| \sum_{j=1}^m \psi_{j}^\beta(z) \mu(\Delta_j^\beta)-\int_{\overline{\mathbb D}} E(z)(\alpha)d\mu(\alpha)\right\|\\
 &\leq & \sum_{j=1}^m \left\|\int_{\Delta_j^\beta} (\psi_j^\beta(z)-\psi_\alpha(z))d\mu(\alpha)\right\| \overset{\eqref{disjointcollection}}{<}\epsilon.\eeqn
This shows that for every $z\in \mathbb E$ there is a $\beta =(F,\epsilon)\in \mathcal{F}$ with $z\in F$ such that $\|\rho_{_\beta}(E(w))-\rho(E(w))\| < \epsilon$ for all $w\in F.$ 
 Moreover, for any $\gamma \in \mathcal{F}$ with $\beta \leq \gamma=(G,\delta)$, we have $\| \rho_\gamma(E(w))-\rho(E(w))\| < \epsilon$ for all $w \in G.$
 
 For each $\beta \in \mathcal{F},$ define $$f_\beta(z)=A+B X_\beta(z)(I_\mathfrak{H}-DX_\beta(z))^{-1}C \text{ for all } z \in \mathbb E,$$ where $$X_\beta (z)=\begin{bmatrix}
     \rho_{\beta}(E(z))& 0\\ 0 & z^{(2)} I_{\mathfrak{H}_2}
 \end{bmatrix}.$$  Note that $f_\beta \in \UCE$ with associated simple representation $\rho_\beta.$ 
 Thus, by Lemma \ref{simplerep}, $\|\pi(f_\beta)\| \leq 1$ for all $\beta\in \mathcal{F}$.
 For the rest of the proof, we fix $z\in \mathbb E$ and $\epsilon' >0.$ Consider $\beta'=(F,\epsilon')\in \mathcal{F}$ such that $z\in F$. As $\|E(w)\|_{\infty} <1,$ let $\delta=\min\{\frac{1-\|E(w)\|_\infty}{2}: w\in F\}$ and $r_1=1-\delta/2.$ Since $ 2\delta \leq 1-\|E(w)\|_{\infty}$ and $\rho$  is a unital $*$-representation, $\|\rho(E(w))\| \le 1-2\delta$ for any $w\in F.$ Let $ \beta = (F, \epsilon) \in \mathcal{F}$ with $0 < \epsilon < \min\{\delta/2, 
 \epsilon'\}$. It follows that $ \beta' \leq \beta $ and for any $ w \in F $, the inequality $ \| \rho_\beta(E(w)) - \rho(E(w)) \| < \epsilon $ yields that $ \| \rho_\beta(E(w)) \| < r_1 $.
 Furthermore, \beq \label{XbetamiXlessthanepsi} \|X_\beta(w)-X(w)\|\leq \epsilon\; \text{for any } w \in F.\eeq
Choose $r_2$ such that  $|w^{(2)}| <r_2 <1$ for all $w \in F$ and $r=\max \{r_1,r_2\}.$
Therefore, 
$ \|X_\beta(w)\| <r$ and subsequently,
\beq \label{DXbetalessthanepsi}
\|D X_\beta(w)\| <r \mbox{ for all } w\in F.
\eeq
Since $\|\rho(E(w))\| \leq 1-2\delta < r_1,\; \|X(w)\| <r,$ and therefore
\beq \label{DXlessthanepsi}
\|DX(w)\| <r \mbox{ for all }w\in F.
\eeq
Now for any $n\in \mathbb N$,
\begin{align*}
    (DX_\beta)^n-(DX)^n
&= D(X_\beta-X)(DX_\beta)^{n-1} + (DX) D(X_\beta-X)(DX_\beta)^{n-2}\\
&+\cdots+ (DX)^{n-1} D (X_\beta-X).
\end{align*}
This together with \eqref{XbetamiXlessthanepsi}--\eqref{DXlessthanepsi} gives that \beqn
\| (D X_\beta(w))^n-(DX(w))^n\| \leq n\|D\| \epsilon r^{n-1} \mbox{ for all } w \in F \text{ and } n \in \mathbb N.
\eeqn
Thus, for any $w\in F,$ we get 
 \begin{align*}
&\| X_\beta(w)(I_\mathfrak{H}-D X_\beta(w))^{-1}-X(w) (I_\mathfrak{H}-D X(w))^{-1}\| \\
&\leq \| (X_\beta(w)-X(w))\| \|(I_\mathfrak{H}-DX_\beta(w))^{-1}\|\\
 & \;\;\;\;+ \|X(w)\|\|[(I_\mathfrak{H}-D X_\beta(w))^{-1}-(I_\mathfrak{H}-D X(w))^{-1}]\|\\
 &\leq \epsilon \left(\frac{1}{1-r}+\frac{r}{(1-r)^2}\right).
 \end{align*}
Let $\gamma\in \mathcal{F}$ with $\beta \leq \gamma,$ then
\beqn
  |f_\gamma(z)-f(z)| &=& |B X_\gamma(z)(I_\mathfrak{H}-D X_\gamma(z))^{-1}C-BX(z) (I_\mathfrak{H}-D X(z))^{-1} C|\\  
  &\leq & \|B\| \|C\| \epsilon \left[\frac{1}{1-r}+\frac{r}{1-r^2}\right] < \|B\| \|C\| \epsilon' \left[\frac{1}{1-r}+\frac{r}{1-r^2}\right].
\eeqn
Therefore, $f_\gamma$ converges to $f$ pointwise on $\mathbb E$. Since $f_\gamma$ is uniformly bounded, by \cite[Theorem 1.4.31]{VS2005}, there exists a subsequence $ \{f_{\gamma_k}\} $ converging uniformly to $ f $. Since each $ \pi(f_{\gamma_k}) $ is a contraction, by Lemma~\ref{properties of SA}(iv), $ \pi(f) $ is a contraction. This completes the proof.
 \end{proof}

\section{Interpolation and Extension Theorem} \label{S4}
In this section, we discuss an analog of the interpolation and extension theorem for tetrablock.
\subsection{Interpolation} \label{ss4.1} An application of the realization theorem is the characterization of interpolation problems via some positivity condition(s). For instance, the positivity of the Pick matrix resolves the Nevanlinna--Pick interpolation problem on the unit disc $\mathbb{D}$ (see \eqref{basic pick matrix}).
In higher-dimensional domains, the single positivity condition may not be sufficient. Instead, it is characterized by the positive semi-definiteness of the matrices
$$\begin{bmatrix}(1 - w_i \overline{w}_j) k(\lambda_i, \lambda_j)\end{bmatrix}_{i,j=1}^n,$$
for every kernel $k$ belonging to a family of kernels that is ``closely" associated with the domain and typically contains the Szeg\"{o} kernel of the domain (for instance, see \cite[Theorem 11.49]{AM2002}, \cite[p.~508]{BS2018}).
Below, we prove an analogous result (Theorem \ref{interpolation theorem}), where the interpolating functions are in $\SAE.$

\begin{proof}[The proof of Theorem \ref{interpolation theorem}:]

 $\mathrm{(i)} \implies \mathrm{(ii)}:$ Let $f\in \mathcal{SA}(\mathbb E)$ interpolates $z_1,\ldots, z_n \in \mathbb E$ to $w_1,\ldots,w_n \in \overline{\mathbb D}.$ 
 Equivalence of $(i)$ and $(ii)$ of Theorem \ref{realization theorem} proves this implication.

          $\mathrm{(ii)} \implies \mathrm{(iii)}:$
          Let $F=\{z_1,\ldots,z_n\}.$ Consider a map $g:F\times F \to \mathbb C$ defined by $g(z_i,z_j)=1-w_i\overline{w}_j$ for all $1 \Le i,j \Le n.$ Since $g$ is self-adjoint and $gk\succcurlyeq 0$ on $F$ for each $k \in\AKE,$ following the proof of Theorem \ref{keytheorem}, we get the desired result. 
          
     $\mathrm{(iii)} \implies \mathrm{(i)}:$ 
      Assume that $\mathrm{(iii)}$ holds. Then by Proposition \ref{decompositionresult} and \cite[Theorem 2.53]{AM2002}, there exist Hilbert spaces $\mathfrak{H}_1$, $\mathfrak{H}_2$, and maps $L: F \to \mathcal{B}(C(\overline{\mathbb D}), \mathfrak{H}_1)$, $q: F \to \mathcal{B}(\mathfrak{H}_2)$ and a unital $*$-representation $\rho: C(\overline{\mathbb D}) \to \mathcal{B}(\mathfrak{H}_1)$ such that the equation \eqref{interpolationeq1} becomes
          \begin{align*}
        &1+ \inp{\rho(E(z_i))L(z_i)1}{\rho(E(z_j))L(z_j)1}_{\mathfrak{H}_1}+ \inp{z_i^{(2)}q(z_i)}{z_j^{(2)}q(z_j)}_{\mathfrak{H}_2}\\
       & = w_i\overline{w}_j +\inp{L(z_i)1}{L(z_j)1}_{\mathfrak{H}_1}+\inp{q(z_i)}{q(z_j)}_{\mathfrak{H}_2}.
    \end{align*}  
             
    Let $\mathfrak{H}=\mathfrak{H}_1 \oplus\mathfrak{H}_2.$ Consider the following (finite dimensional) subspaces of $\mathbb C\oplus \mathfrak{H}$ \beqn \mathfrak{H}_d=\rm{span} \left\{\left(\substack{1\\\;\\ \rho(E(z_i))L(z_i)1\\\;\\ z_i^{(2)} q(z_i)}\right): 1 \le i\le n\right\},\quad \mathfrak{H}_r=\rm{span} \left\{\left(\substack{w_i\\\;\\ L(z_i)1\\\;\\  q(z_i)}\right): 1 \le i\le n\right\}.\eeqn 
    
    Define $V:\mathfrak{H}_d \to \mathfrak{H}_r$ by \beq\label{iso-int} V \left(\substack{1\\\;\\ \rho(E(z_i))L(z_i)1\\\;\\ z_i^{(2)} q(z_i)}\right)=\left(\substack{w_i\\\;\\ L(z_i)1\\\;\\  q(z_i)}\right)\; \text{for all }i=1,\ldots,n.\eeq 
 We now extend $V$ to an unitary operator on $\mathbb C \oplus \mathfrak{H}$ and write $V=\begin{bmatrix}
        A & B\\ C & D
    \end{bmatrix}.$

By equivalence of $(iv)$ and $(i)$ of Theorem \ref{realization theorem}, the function $f:\mathbb E\to \mathbb C$ defined by $$f(z)=A +B X(z) \left(I_{\mathfrak{H}}-DX(z)\right)^{-1}C,\; \text{where } X(z)=\begin{bmatrix}
        \rho(E(z)) &0\\ 0 & z^{(2)}I_{\mathfrak{H}_2}
    \end{bmatrix}$$ is in the class $\SAE.$ By \eqref{iso-int}, it is easy to verify that $f(z_i)=w_i$ for all $i=1,\ldots,n$. This completes the proof.
\end{proof}

\subsection{Extension theorem} \label{S6}

Recall that $\mathcal{M}'$ is the collection of all commuting $3$-tuples $T=(T_1, T_2, T_3)$ with $\sigma(T)\subseteq \mathbb E$ such that  $$\|T_2\| \leq 1 \mbox{ and } \|\psi_{\alpha}(T)\|\leq1 \text{ for all } \alpha \in \overline{\mathbb D}.$$ 

Clearly, $\mathcal{M}\subseteq \mathcal{M}'$ and for any $0 <r<1$ and $T\in \mathcal{M}', \; r \cdot T=(rT_1,rT_2,r^2T_3)\in \mathcal{M}$.

We first prove the following result, which establishes the necessary condition of the Theorem \ref{extension theorem}.

\begin{lemma} \label{motivatingextension}
    Let $f \in \mathcal H \mathcal E(V)$. If there exists $g \in H^{\infty}(\mathbb E)$ such that $g|_{V}=f,$ $\frac{1}{\|f\|_{\infty}}g\in \SAE$  and $\|g\|_\infty=\|f\|_{\infty},$ then \beqn \|f(S)\| \leq \|f\|_{\infty}\text{ for all }S \in \mathcal M'\text{ subordinate to }V.\eeqn
\end{lemma}

\begin{proof}
   Let $(S, \mathfrak{K)} \in \mathcal M'$ be subordinate to $V.$ 
   Let $\{r_n\}$ be a sequence such that $0 <r_n <1$ and $r_n \to 1.$ Define $g_{n}:\mathbb E \to \mathbb C$ by $g_{n}(z)=g(r_n \cdot z)$ for all $n\in \mathbb N.$
Then $\{g_{n}\}$ is a sequence of holomorphic functions that converges to $g$ pointwise. Since $g$ is bounded, $\{g_{n}\}$ is uniformly bounded. By using the dominated convergence theorem, we get that $g_{n}(S)$ converges to $g(S)$ in the weak operator topology. 
Now for $x,y \in \mathfrak{K}$, \beqn
|\inp{g(S)x}{y}|=\lim_{n \to \infty} |\inp{g(r_n \cdot S) x}{y}|  \leq \lim_{n \to \infty} \|g(r_n \cdot S)\| \|x\| \|y\| \leq \|f\|_{\infty} \|x\| \|y\|,\eeqn
where the last inequality follows from the fact that $r_n \cdot S \in \mathcal M$ for each $n \in \mathbb N$. Therefore, $\|f(S)\|=\|g(S)\| \leq \|f\|_{\infty}.$ 
\end{proof}

Now for the proof of the sufficient part of Theorem \ref{extension theorem}, we follow a familiar technique through the interpolation theorem, as used in the proof of  \cite[Theorem 1.13]{AMc2003} for bidisc, \cite[page 506]{BS2018} for symmetized bidisc.   
 
 For any $g\in H^\infty(\mathbb E)$, let $\|g\|_{\mathcal{M}}=\mathrm{sup}\{\|g(T)\|: T\in \mathcal{M}\}$. For $f\in \mathcal{H}\mathcal{E}(V),$ set $$\mathcal{SA}_f(\mathbb E)=\{g\in H^\infty(\mathbb E): \|g(T)\| \leq \|f\|_{\infty}\; \text{for all } T\in \mathcal{M}\}.$$ Note that $\SAFE$ is nonempty. Further, for $g \in \SAFE,$ $\|g\|_{\mathcal{M}} \leq \|f\|_{\infty}.$ Now for the given data $Z=\{z_1,\ldots,z_n\}\subseteq V$ and $W=\{w_1,\ldots,w_n\}\subseteq \mathbb C$, consider $$\rho^f(Z,W)=\mathrm{inf}\{\|g\|_\mathcal{M}: g\in \mathcal{SA}_f(\mathbb E)\; \text{and } g(z_i)=w_i,\;i=1,\ldots, n\}.$$
The following lemma shows that $\rho^f(Z,W)$ is attained.
\begin{lemma} \label{extremalfunction}
    Let $f\in \mathcal{H}\mathcal{E}(V).$ For $Z=\{z_1,\ldots,z_n\}\subseteq V$ and $W=\{w_1,\ldots,w_n\}\subseteq \mathbb C,$ there exists $g \in \SAFE$ such that $g(z_i)=w_i$ for all $i=1,\ldots,n$ and $\|g\|_\mathcal{M}=\rho^f(Z,W).$
\end{lemma}
\begin{proof}
Let $\{g_m\}$ be a sequence in $\mathcal{SA}_f(\mathbb E)$ such that $g_m(z_i)=w_i$ for all $i=1,\ldots,n$ and 
$$\rho^f(Z,W)=\lim_{m\to \infty} \|g_m\|_\mathcal{M}.$$ 
Note that for any $g\in \mathcal{SA}_f(\mathbb E)$, $\|g\|_\infty \leq \|g\|_{\mathcal{M}}\leq \|f\|_{\infty}.$ Consequently, there is a subsequence $\{g_{m_k}\}$ of $\{g_m\}$ that converges to g in $H^\infty(\mathbb E).$ Applying a similar argument involving the dominated convergence theorem, as used in the proof of Lemma \ref{motivatingextension}, we obtain
$\|g(T)\| \leq \|f\|_{\infty}$ for all $T \in \mathcal{M}.$
 Thus, $g\in \mathcal{SA}_f(\mathbb E)$ and $g(z_i)=w_i$ for all $i=1,\ldots,n.$ This yields that $\rho^f(Z,W) \leq \|g\|_\mathcal{M}.$ Now it remains to show that $\|g\|_\mathcal{M}\leq \rho^f(Z,W).$
Let $(T, \mathfrak{K}) \in \mathcal{M}.$ Then for $x,y\in \mathfrak{K}$, we have $$|\inp{g(T)x}{y}|=\lim_{k\to \infty}|\inp{g_{m_k}(T)x}{y}|\leq \rho^f(Z,W) \|x\| \|y\|.$$ Thus $\|g(T)\|\leq \rho^f(Z,W)$ for each $T \in \mathcal{M}.$ This completes the proof.  
\end{proof}

For a given  $n$ distinct points $z_1,\ldots,z_n \in \mathbb E,$ let $\mathcal{K}_z$ denote the set of all $n\times n$ strictly positive definite matrices $k=\begin{bmatrix}
    k(i,j)
\end{bmatrix}_{i,j=1}^n$ satisfying $k(i,i)=1$ for $i=1,\ldots,n,$ \beq \label{condition 1}\begin{bmatrix} (1-z_i^{(2)}\bar{z}_j^{(2)})k(i,j) \end{bmatrix}_{i,j=1}^n \geq 0,\; \text{and } \begin{bmatrix} (1-\psi_\alpha(z_i)\overline{\psi_\alpha(z_j)})k(i,j)\; \end{bmatrix}_{i,j=1}^n \geq 0.\eeq
For the ease of reference, we restate the equivalence of (i) and (ii) from Theorem \ref{interpolation theorem} in the context of $\mathcal{K}_z.$ 

\begin{theorem}\label{interpolation rephrased}
    Let $w_1,\ldots,w_n \in \overline{\mathbb D}.$ Then the $n\times n$ matrix $\begin{bmatrix} (1-w_i \bar{w}_j)k(i,j)\end{bmatrix}_{i,j=1}^n$ is positive semi-definite for all $k\in \mathcal{K}_z$ if and only if there is a function $g\in \SAE$ with the property $g(z_i)=w_i$ for $i=1,\ldots,n.$
\end{theorem}
In view of Lemma \ref{extremalfunction},  we call a function  $g\in \mathcal{SA}_f(\mathbb E)$ \textit{extremal} if it satisfies $\|g\|_\mathcal{M}=\rho^f(Z,W),$ and $g(z_i)=w_i,~ i=1, \ldots, n,$ for the given data $Z=\{z_1,\ldots,z_n\} \subset \mathbb E$ and $W=\{w_1,\ldots,w_n\}\subseteq \mathbb C.$

\begin{lemma}\label{lemma for extension}
    Let $f\in \mathcal{H}\mathcal{E}(V).$ If $g$ is an extremal function for the data $Z=\{z_1,\ldots,z_n\}\subseteq V,\; W=\{w_1,\ldots,w_n\}\subseteq \mathbb C$, then there is a commuting $3$-tuple $S=(A,B,P)\in \mathcal{M}'$ subordinate to $Z$ such that $\|g(S)\|=\rho^f(Z,W).$
\end{lemma}
\begin{proof}
    We first show that $\mathcal{K}_z$ is compact. Since the trace of each $k \in \mathcal{K}_z$ is $n,$ $\mathcal{K}_z$ is bounded.
Let $\{k_n\}$ be a sequence in $\mathcal{K}_z$ converging to some $k.$ It is evident that $k \geq0$ with $k(i,i)=1$ for all $i=1,\ldots,n$. Moreover, $k$ satisfies the conditions $$\begin{bmatrix} (1-z_i^{(2)}\bar{z}_j^{(2)})k(i,j)\end{bmatrix}_{i,j=1}^n\geq 0 \mbox{ and }\begin{bmatrix}(1-\psi_\alpha(z_i)\overline{\psi_\alpha(z_j)})k(i,j)\end{bmatrix}_{i,j=1}^n \geq 0, \quad \alpha\in \overline{\mathbb D}.$$ 
  If $k$ is not strictly positive definite, then there exists a non-zero vector $v=(v_1,\ldots,v_n)^t\in \mathbb C^{n\times 1}$ such that $kv=0.$ Let $A_1, A_2$ and $A_3$ be three $n\times n$ diagonal matrices with $(i,i)$th entries as $z_i^{(1)}, z_i^{(2)}$ and $z_i^{(3)}$, respectively, for $i=1, \ldots, n.$
    Thus, for any polynomial $p$ in three variables, $p(A_1,A_2,A_3)$ is a diagonal matrix with $(i,i)$th entry as $p(z_i).$ 

    Since $\begin{bmatrix}
        (1-z_i^{(2)}\bar{z}_j^{(2)})k(i,j)
    \end{bmatrix}_{i,j=1}^n\geq 0$,
   we have $0 \leq \inp{kv}{v}-\inp{kA_2v}{A_2v}.$ Which further yields  that $k(A_2v)=0.$  By taking $\alpha=0$ and $1$ in $\begin{bmatrix}(1-\psi_\alpha(z_i)\overline{\psi_\alpha(z_j)})k(i,j)\end{bmatrix}_{i,j=1}^n \geq 0,$ we get $k(A_1 v)=k(A_3v)=0.$
    Therefore $k(p(A_1,A_2,A_3)v)=0.$ Without loss of generality, assume that $v_j\neq 0$ for some $j \in \{1, \ldots, n\}.$ Choose a polynomial $q$ such that $q(z_i)=0$ for all $i\neq j$ and $q(z_j)=1.$ 
    Therefore, $k(q(A_1,A_2,A_3)v)=k (0,\ldots,v_j,\ldots,0)^t=0.$ Subsequently, the $j$th column of $k$ is a zero vector, which is a contradiction to $k(j,j)=1$. 

     Let $f\in \mathcal{H}\mathcal{E}(V)$ and assume that $g$ is an extremal function for the given data $Z=\{z_1,\ldots,z_n\}\subseteq V,\; W=\{w_1,\ldots,w_n\}\subseteq\mathbb C.$ For simplicity, let $\rho=\rho^f(Z,W).$ Since $\|g(T)\| \leq \|g\|_\mathcal{M}=\rho$ for any $T\in \mathcal{M}$, we have $\frac{1}{\rho}g\in \SAE$ and $\frac{1}{\rho}g(z_i)=\frac{w_i}{\rho}$ for all $i=1,\ldots,n.$ By Theorem \ref{interpolation rephrased}, $\begin{bmatrix}(\rho^2-w_i\bar{w}_j)k(i,j)\end{bmatrix}_{i,j=1}^n\geq 0$ for all $k\in \mathcal{K}_z.$

    Consider the set \beq \label{set 1} Y=\{\sigma\leq \|f\|_{\infty}: \begin{bmatrix}(\ \sigma^2-w_i \bar{w}_j)k(i,j)\end{bmatrix} \geq 0\; \text{for all } k\in \mathcal{K}_z\}.\eeq Since $\rho \in Y,$ $Y$ is nonempty. Now for any $\sigma \in Y,$ again by Theorem \ref{interpolation rephrased}, there exists $f_\sigma' \in \SAE$ such that $f_\sigma'(z_i)=\frac{w_i}{\sigma}$ for $i=1,\ldots,n.$ Setting $f_\sigma=\sigma f_\sigma'$, then for any $T\in \mathcal{M}$, $\|f_\sigma(T)\| \leq \sigma \leq \|f\|_{\infty}.$  
    Therefore, $f_\sigma\in \mathcal{SA}_f(\mathbb E)$ such that $f_\sigma(z_i)=w_i$ for all $i=1,\ldots,n$ and $\|f_\sigma\|_{\mathcal{M}}\leq \sigma.$ By definition of $\rho,$ $\rho \leq \|f_\sigma\|_\mathcal{M} \leq \sigma$. This shows that $\rho=\inf Y.$
    
 We now claim that there exist $k' \in \mathcal{K}_z$ and a non-zero column vector $x=(x_1,\ldots,x_n)^t \in \mathbb C^{n\times 1}$ such that \beq \label{extension eq1}\sum_{i,j=1}^n (\rho^2-w_i \bar{w}_j)k'(i,j)\bar{x}_i x_j = 0.\eeq
    Note that the set $\mathcal{C}=\{\tilde{k}=\begin{bmatrix}(\rho^2-w_i \bar{w}_j)k(i,j)\end{bmatrix}_{i,j=1}^n: k\in \mathcal{K}_z\}$ is compact. 
    Let $\lambda$ be the function on $\mathcal{C}$ which maps each element to its minimum eigenvalue.
    Clearly $\lambda$ is continuous and therefore, $\alpha=\inf\{\lambda(\tilde{k}): \tilde{k}\in \mathcal{C}\}$ is attained on $\mathcal{C}$.
    If $\alpha=0$, then we get a $k'\in \mathcal{K}_z$ and a non-zero vector $x=(x_1,\ldots,x_n)^t\in \mathbb C^{n\times 1}$ which satisfies \eqref{extension eq1}. To prove the claim, it suffices to show that $\alpha$ must be zero. On the contrary, assume $\alpha>0.$ Then for any $k\in \mathcal{K}_z$ and $x=(x_1,\ldots,x_n)^t \in \mathbb C^{n\times 1}$, we have $$\left \langle \begin{bmatrix}(\rho^2-w_i \bar{w}_j)k(i,j)\end{bmatrix} x,x\right\rangle \geq \lambda(k)\|x\|^2 \geq \alpha \|x\|^2.$$ 
   If $\beta=\rm{sup}\{\|k\|: k\in \mathcal{K}_z\},$ then by choosing $0 < \epsilon < \frac{\alpha}{\beta},$ we get $$\left \langle \begin{bmatrix}(\rho^2-\epsilon-w_i \bar{w}_j)k(i,j)\end{bmatrix} x,x\right\rangle \geq 0$$ for all $x\in C^{n}$ and $k \in \mathcal{K}_z$. This contradicts the fact that $\rho =\inf Y.$
    
  Let $k'(\cdot,j) \in \mathbb C^{n\times 1}$ denote the $j$th column vector of $k'\in \mathcal{K}_z$.
Define a $3$-tuple of operators $S=(A,B,P)$ on the finite dimensional Hilbert space spanned by $\{k'( \cdot, j), j=1, \ldots, n\}$ as follows : $$A^* k'(\cdot,j)=\bar{z}^{(1)}_j k'(\cdot,j);\;B^* k'(\cdot,j)=\bar{z}^{(2)}_j k'(\cdot,j);\;P^* k'(\cdot,j)=\bar{z}^{(3)}_j k'(\cdot,j)\; \text{for all } j=1, \ldots, n.$$ Since $k'$ is strictly positive definite,  $S^*$ is a well-defined commuting $3$-tuple of operators. Moreover, the Taylor joint spectrum $\sigma(S)=\{z_1,\ldots,z_n\}.$ In fact, $S$ is subordinate to $\{z_1,\ldots,z_n\}$ and $S,\;S^*\in \mathcal{M}'.$
   For the given extremal function $g$, we have $g(S)^*k'(\cdot,j)=\overline{g(z_j)} k'(\cdot, j)=\bar{w}_j k'(\cdot, j)$ for all $j=1,\ldots,n.$ 
    Now, $\begin{bmatrix}(\rho^2-w_i\overline{w}_j)k'(i,j)\end{bmatrix}_{i,j=1}^n \geq 0$ gives that $\|g(S)\|=\|g(S)^*\| \leq \rho$. By \eqref{extension eq1}, it follows that $\|g(S)\|=\rho.$ This completes the proof.
\end{proof}

\begin{proof}[Proof of the Theorem \ref{extension theorem}:]
Let $V\subseteq \mathbb E$ and $f \in \mathcal H \mathcal E(V)$ with the property \eqref{extension condition}. Consider a countable dense subset $\{z_i \in V :i=1, \ldots, N,\; N\in \mathbb N \cup\{\infty\}\}.$ For each $n\in \mathbb N$, let $\rho_n=\rho^f(Z_n, W_n)$, where $Z_n=\{z_1,\ldots,z_n\}$ and $W_n=\{f(z_1),\ldots,f(z_n)\}.$ By Lemma \ref{extremalfunction}, there is an extremal function $g_n$ such that $\rho_n=\|g_n\|_{\mathcal{M}}.$ It follows from Lemma \ref{lemma for extension}, there exists $S_n\in \mathcal{M}'$ subordinate to $Z_n$ such that $\|g_n(S_n)\|=\rho_n.$ Consequently, 
\beqn \|g_n\|_\mathcal{M} =\rho_n=\|g_n(S_n)\|
= \|f(S_n)\| \leq \|f\|_{\infty}.
\eeqn
Since $\|g_n\|_\infty \leq \|g_n\|_{\mathcal{M}}\leq \|f\|_{\infty}$, there is a subsequence $\{g_{n_k}\}$ of $\{g_n\}$ that converges pointwise to a function $g\in H^\infty(\mathbb E).$ Thus, $g|_{V}=f$ and $\|g\|_\infty \leq \|f\|_{\infty}.$ 
A similar use of the dominated convergence theorem, as in Lemma \ref{motivatingextension}, yields $\|g(T)\| \leq \|f\|_{\infty}$ for all $T\in \mathcal{M}.$ This completes the proof.
\end{proof}

For $V \subseteq\mathbb E,$ we say that $V$ has the extension property in $\SAE$ if for any $f\in \mathcal{H}\mathcal{E}(V),$ there exists $g \in H^\infty(\mathbb E)$ such that $\frac{1}{\|f\|_{\infty}}g\in \SAE$,\; $g|_V=f$ and $\|f\|_{\infty}=\|g\|_\infty.$ 
\begin{corollary}
    Let $V\subseteq \mathbb E$. Then $V$ has the extension property in $\SAE$ if and only if for every $f\in \mathcal{H}\mathcal{E}(V),$ $\|f(T)\| \leq \|f\|_{\infty}$ for all $T \in \mathcal{M}'$ subordinate to $V$.
\end{corollary}

\section{Toeplitz corona problem} \label{S5}
This section is devoted to establishing the Toeplitz corona theorem for the tetrablock. As a preparatory step, we first discuss the vector-valued counterpart of certain previously obtained results, including Theorem \ref{realization theorem}. However, we omit their proofs as it proceeds along the same line.
\subsection{Vector-valued realization theorem} \label{SS5.1} Let $\mathfrak{L}$ be a complex separable Hilbert space and $F \subset \mathbb E.$ A $\mathcal{B}(\mathfrak{L})$-\emph{valued weak kernel} on $F$ means a positive semi-definite function $k : F \times F \to \mathcal{B}(\mathfrak{L})$, that is,
$$
\sum_{i,j=1}^n \left\langle k(z_i, z_j) v_j, v_i \right\rangle_{\mathfrak{L}} \geq 0
$$ 
for every finite set $\{z_1, \ldots, z_n\} \subset F$ and for all vectors $v_1, \ldots, v_n \in \mathfrak{L}$. If, in addition, $k(z, z) \neq 0$ for all $z \in F$, then $k$ will be referred to as a $\mathcal{B}(\mathfrak{L})$-\emph{valued kernel}.

A  $\mathcal{B}(\mathfrak{L})$-valued kernel $k$ is said to be \emph{admissible} if $(1-z^{(2)}\overline{w}^{(2)})k(z,w)$ and $(1-\psi_\alpha(z)\overline{\psi_\alpha(w)})k(z,w)$ are positive semi-definite for all $\alpha \in \overline{\mathbb D}.$ Similarly, {\it weakly admissible} can be defined.
Further, a function $\Gamma:F \times F \to \mathcal{B}(C(\overline{\mathbb D}),\mathcal{B}(\mathfrak{L}))$ is called \emph{completely positive kernel} if for any $n\in \mathbb N$, $v_1,\ldots,v_n \in \mathfrak{L}$, $z_1,\ldots,z_n \in F$, and $h_1,\ldots,h_n \in C(\overline{\mathbb D})$, we have $$\sum_{i,j=1}^n \inp{\Gamma(z_i,z_j)(h_i \bar{h}_j)v_j}{v_i}_{\mathfrak{L}} \geq 0.$$

The following result serves as a vector-valued counterpart of Proposition \ref{decompositionresult} (cf. \cite[Lemma 3.3]{BS2022}).
\begin{proposition}\label{decompositionresultv}
    Let $\Gamma: F \times F \to \mathcal{B}(C(\overline{\mathbb D}), \mathcal{B}(\mathfrak{L}))$ be a completely positive kernel. Then there exist a Hilbert space $\mathfrak{H}$ and a function $L: F \to \mathcal{B}(C(\overline{\mathbb D}),\mathcal{B}(\mathfrak{H},\mathfrak{L}))$ such that $$\Gamma(z,w)(h_1 \bar{h}_2)=L(z)(h_1)L(w)(h_2)^*\; \text{for all } z,w\in F\; \text{and } h_1,h_2\in C(\overline{\mathbb D}).$$ Moreover, there is a unital $*$-representation $\rho : C(\overline{\mathbb D}) \to \mathcal{B}(\mathfrak{H})$ such that $$\rho(h_1)^*L(z)(h_2)^*=L(z)(h_1 h_2)^*\; \text{for all } z \in F \; \text{and}\; h_1,h_2 \in C(\overline{\mathbb D}).$$
\end{proposition}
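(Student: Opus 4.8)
The plan is to mimic the GNS/Kolmogorov-type construction that proved the scalar Proposition \ref{decompositionresult}, carrying the Hilbert-space argument $\mathfrak L$ along as a "coefficient" space. Concretely, I would introduce the auxiliary function
\[
\Gamma' : \bigl(F \times C(\overline{\mathbb D}) \times \mathfrak L\bigr) \times \bigl(F \times C(\overline{\mathbb D}) \times \mathfrak L\bigr) \to \mathbb C,
\qquad
\Gamma'\bigl((z,h_1,v_1),(w,h_2,v_2)\bigr) = \inp{\Gamma(z,w)(h_1 h_2^*) v_2}{v_1}_{\mathfrak L}.
\]
The hypothesis that $\Gamma$ is a completely positive kernel in the sense of Definition \ref{completepositive} is exactly the statement that $\Gamma'$ is a positive semi-definite function on the set $F \times C(\overline{\mathbb D}) \times \mathfrak L$; here one has to be slightly careful because the scalars $c_i$ appearing in the definition of positive semi-definiteness can be absorbed into the vectors $v_i$ (by scaling $v_i \mapsto c_i v_i$ and using conjugate-linearity in the appropriate slot), so the two notions genuinely coincide. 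Then \cite[Theorem 2.53]{AM2002} supplies a Hilbert space $\mathfrak K$ and a map $g : F \times C(\overline{\mathbb D}) \times \mathfrak L \to \mathfrak K$ with $\Gamma' = \inp{g(\cdot)}{g(\cdot)}_{\mathfrak K}$ and $\mathfrak K = \overline{\operatorname{span}}\{g(z,h,v)\}$.

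The next step is to recognize the linearity structure. For fixed $z$ and $h$, the map $v \mapsto g(z,h,v)$ is (conjugate-)linear; one checks $\|g(z,h,v)\|^2 = \inp{\Gamma(z,z)(hh^*)v}{v} \le \|\Gamma(z,z)(hh^*)\|\,\|v\|^2$, so it is bounded, and similarly $h \mapsto g(z,h,v)$ behaves well. This lets me define $L(z)(h) : \mathfrak L \to \mathfrak K$... but the statement wants $L(z)(h) \in \mathcal B(\mathfrak H, \mathfrak L)$, i.e. the roles are transposed. So I would instead set $\mathfrak H := \mathfrak K$ and define $L(z)(h) \in \mathcal B(\mathfrak H, \mathfrak L)$ by $\inp{L(z)(h) \xi}{v}_{\mathfrak L} := \inp{\xi}{g(z, h^*, v)}_{\mathfrak H}$ for $\xi \in \mathfrak H$, $v \in \mathfrak L$; boundedness follows from the same estimate, and then one computes
\[
\inp{L(z)(h_1) L(w)(h_2)^* v_2}{v_1}_{\mathfrak L} = \inp{g(w, h_2^*, v_2)}{g(z, h_1^*, v_1)}_{\mathfrak H}
 = \Gamma'\bigl((w,h_2^*,v_2),(z,h_1^*,v_1)\bigr) = \inp{\Gamma(w,z)(h_2^* h_1) v_1}{v_2},
\]
which after relabeling and using self-adjointness of $\Gamma$ gives $\Gamma(z,w)(h_1 h_2^*) = L(z)(h_1) L(w)(h_2)^*$ as required. (The exact placement of adjoints and conjugates is the sort of bookkeeping I would pin down carefully in the write-up; the precise convention should be chosen so that the final multiplier identity $\rho(h_1)^* L(z)(h_2)^* = L(z)(h_1 h_2)^*$ comes out clean.)

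For the representation $\rho$, I would define $\rho(h_1)$ on the dense span of $\{g(z,h,v)\}$ by $\rho(h_1) g(z,h_2,v) := g(z, h_1 h_2, v)$, exactly as in the scalar proof, and verify it is well-defined, bounded, multiplicative, unital, and $*$-preserving by the same inner-product manipulation used in Proposition \ref{decompositionresult} (the vectors $v$ just ride along passively). The intertwining relation $\rho(h_1)^* L(z)(h_2)^* = L(z)(h_1 h_2)^*$ then follows by unwinding the definition of $L$ in terms of $g$ and the definition of $\rho$. The main obstacle — really the only non-routine point — is getting the adjoints/conjugates in the definition of $L(z)(h)$ consistent, since $\Gamma$ is now operator-valued and $C(\overline{\mathbb D})$ enters with a $*$; once the right convention is fixed, every verification is a line-by-line transcription of the scalar argument with an extra inner-product pairing against vectors of $\mathfrak L$. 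I would also remark that separability of $\mathfrak L$ ensures $\mathfrak H$ is separable, matching the paper's standing assumption on Hilbert spaces.
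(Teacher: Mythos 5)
Your route is genuinely different from the paper's. The paper fixes an orthonormal basis $\{e_\alpha\}$ of $\mathfrak L$, forms the scalar-type kernel $\Gamma'((z,\alpha),(w,\beta))(h)=\inp{e_\alpha}{\Gamma(z,w)(h^*)e_\beta}$ on $F\times\mathcal I$, invokes the already-proved scalar Proposition \ref{decompositionresult} to get $\mathfrak H$, $L'$ and $\rho$, and then assembles $L(z)(h)^*$ by declaring its action on the basis vectors via $(L(z)(h))^*e_\alpha=L'(z,\alpha)(h^*)$. You instead run the Kolmogorov construction directly on $F\times C(\overline{\mathbb D})\times\mathfrak L$, which is coordinate-free, avoids the (unaddressed in the paper) step of extending a basis-wise definition to a bounded operator, and makes the identification of complete positivity with positive semi-definiteness of a scalar kernel transparent; the paper's reduction buys the ability to quote the scalar proposition verbatim. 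Both are sound strategies.

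The one place where your write-up, as literally given, would not go through is precisely the bookkeeping you deferred, and it is worth being explicit about why. With the paper's convention ($\inp{\cdot}{\cdot}$ linear in the first slot) and your kernel $\Gamma'((z,h_1,v_1),(w,h_2,v_2))=\inp{\Gamma(z,w)(h_1h_2^*)v_2}{v_1}$, the Kolmogorov map satisfying $\inp{g(z,h_1,v_1)}{g(w,h_2,v_2)}=\Gamma'((z,h_1,v_1),(w,h_2,v_2))$ is necessarily \emph{conjugate}-linear in the $\mathfrak L$-variable (compare the $v_1$-dependence of both sides). Consequently the proposed pairing $\inp{L(z)(h)\xi}{v}:=\inp{\xi}{g(z,h^*,v)}$ is linear in $\xi$ \emph{and} linear in $v$, i.e.\ a bilinear rather than sesquilinear form, so it does not define a linear operator $L(z)(h)\in\mathcal B(\mathfrak H,\mathfrak L)$; relatedly, your final chain lands on $\inp{\Gamma(w,z)(h_2^*h_1)v_1}{v_2}$, which is the complex conjugate of the target rather than the target itself. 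The fix is to apply the Kolmogorov factorization to the transposed kernel $((z,h_1,v_1),(w,h_2,v_2))\mapsto\inp{\Gamma(w,z)(h_2h_1^*)v_1}{v_2}$ (still positive semi-definite by the same scalar-absorption argument, after swapping the summation indices), so that the resulting map $G(z,h,v)$ is linear in $v$ and conjugate-linear in $h$; then $L(z)(h)^*v:=G(z,h,v)$ is a bounded linear operator, $h\mapsto L(z)(h)$ is linear, $\inp{L(z)(h_1)L(w)(h_2)^*v_2}{v_1}=\inp{G(w,h_2,v_2)}{G(z,h_1,v_1)}=\inp{\Gamma(z,w)(h_1h_2^*)v_2}{v_1}$ comes out with no stray conjugate, and $\rho(h_1)G(z,h_2,v):=G(z,h_1^*h_2,v)$ gives the unital $*$-representation with $\rho(h_1)^*L(z)(h_2)^*=L(z)(h_1h_2)^*$. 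With that single adjustment your argument is complete; your closing remark that separability of $\mathfrak L$ yields separability of $\mathfrak H$ is correct but, unlike in the paper's basis-dependent proof, not even needed for your construction.
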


 Following the notation of \cite[Definition 11.25]{AM2002}, for functions $k_1,k_2 : \mathbb{E} \times \mathbb{E} \to \mathcal{B}(\mathfrak{L}),$ we define  $k_1 \oslash k_2: \mathbb{E} \times \mathbb{E} \to \mathcal{B}(\mathfrak{L} \otimes \mathfrak{L})$ to be the function given by
\[
(k_1 \oslash k_2)(z,w) = k_1(z,w) \otimes k_2(z,w), \quad z,w \in \mathbb{E}.
\]

The following result provides a vector-valued analog of Theorem \ref{keytheorem} (cf. \cite[Theorem 11.26]{AM2002}, \cite[Lemma 5.1]{BS2022}).
\begin{theorem} \label{keytheoremv}
     Let $g: \mathbb E \times \mathbb E\to \mathcal{B}(\mathfrak{L})$ be a self-adjoint function, that is, $g(z,w)=g(w,z)^*$. Then $g \oslash k$ is positive semi-definite for any $\mathcal{B}(\mathfrak{L})$-valued admissible kernel $k$ if and only if there exist a completely positive kernel $\Gamma:\mathbb E \times \mathbb E \to\mathcal{B}(C(\overline{\mathbb D}),\mathcal{B}(\mathfrak{L}))$ and a $\mathcal{B}(\mathfrak{L})$-valued weak kernel $\Delta$ on $\mathbb E$ such that $$g(z,w)=\Gamma(z,w)(1-E(z)\overline{E(w)})+(1-z^{(2)}\overline{w}^{(2)})\Delta(z,w),\; z,w\in \mathbb E.$$
\end{theorem}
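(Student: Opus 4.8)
The plan is to mirror the scalar proof of Theorem~\ref{keytheorem}, replacing scalars by operators in $\mathcal{B}(\mathfrak{L})$ and the Hahn--Banach separation argument by its operator-valued counterpart. One direction is immediate: if the representation $g(z,w)=\Gamma(z,w)(1-E(z)\overline{E(w)})+(1-z^{(2)}\overline{w}^{(2)})\Delta(z,w)$ holds, then using Proposition~\ref{decompositionresultv} to write $\Gamma(z,w)(1-E(z)\overline{E(w)})=L(z)(1)L(w)(1)^*+L(z)(E(z))L(w)(E(w))^*$ and a Kolmogorov decomposition $\Delta(z,w)=q(z)q(w)^*$, one checks that for any admissible $\mathcal{B}(\mathfrak{L})$-valued kernel $k$ the function $g\oslash k$ is a sum of Schur products of positive semi-definite $\mathcal{B}(\mathfrak{L}\otimes\mathfrak{L})$-valued kernels with the manifestly positive kernels $(1-E(z)\overline{E(w)})$-type and $(1-z^{(2)}\overline{w}^{(2)})$-type factors coming from admissibility, hence positive semi-definite. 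This uses only that the Schur (tensor) product of positive semi-definite operator kernels is positive semi-definite, which is standard (cf.\ \cite[Definition~11.25]{AM2002}).

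For the converse I would first establish it on an arbitrary finite subset $F\subseteq\mathbb E$. By Lemma~\ref{onfinitesetv}, $\mathcal{C}_F(\mathcal{B}(\mathfrak{L}))$ is a closed cone in the weak-$*$ topology of $\mathcal{B}(\mathfrak{L}^{|F|})$ containing the matrix with all entries $I_{\mathfrak{L}}$, and (arguing as in Lemma~\ref{nonemptyinterior}) it contains all positive $|F|\times|F|$ matrices over $\mathcal{B}(\mathfrak{L})$, so it has nonempty interior. Suppose $G=(g(z,w))_{z,w\in F}\notin\mathcal{C}_F(\mathcal{B}(\mathfrak{L}))$; by Hahn--Banach separation there is a weak-$*$ continuous real-linear functional separating $G$ from the cone, which (after the same self-adjoint symmetrization as in the scalar proof) has the form $M\mapsto\mathrm{tr}(MC)$ for a self-adjoint trace-class operator $C$ on $\mathfrak{L}^{|F|}$, nonnegative on $\mathcal{C}_F(\mathcal{B}(\mathfrak{L}))$ and strictly negative on $G$. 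Testing against the generators of the cone described in the finite-set lemma shows that the block operator $C$, suitably transposed/reindexed, defines a $\mathcal{B}(\mathfrak{L})$-valued weakly admissible kernel on $F$: testing with $\Gamma=0$, $\Delta$ ranging over rank-one positive kernels $v_i v_j^*$ times scalars gives weak positivity and the condition $(1-z^{(2)}\overline{w}^{(2)})C(z,w)\ge0$; testing with $\Delta=0$, $\Gamma(z_i,z_j)(h_ih_j^*)$ of the appropriate rank-one form gives $(1-\psi_\alpha(z_i)\overline{\psi_\alpha(z_j)})C(z,w)\ge0$ for each $\alpha\in\overline{\mathbb D}$. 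Extending $C$ by zero off $F\times F$ and adding $\epsilon k_1$ for an admissible $k_1\in\AKE$ (vector-valued), we get an admissible kernel, so $g\oslash(\epsilon k_1+C)$ is positive semi-definite on $F$; letting $\epsilon\to0$ and pairing with a suitable vector gives $\mathrm{tr}(GC)=\sum_{i,j}\langle g(z_i,z_j)\,\text{(block of }C)\,\cdot,\cdot\rangle\ge0$, contradicting strict negativity. Hence $G\in\mathcal{C}_F(\mathcal{B}(\mathfrak{L}))$.

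To pass from finite sets to all of $\mathbb E$, I would run the same Kurosh inverse-limit argument as in the proof of Theorem~\ref{keytheorem}: for each finite $F$ let $Y_F$ be the set of pairs $(\Gamma_F,\Delta_F)\in\mathcal{B}(C(\overline{\mathbb D}),\mathcal{B}(\mathfrak{L}))_F^+\times\mathcal{B}(\mathfrak{L})_F^+$ realizing $g$ on $F$; boundedness estimates exactly as in Lemma~\ref{onfinitesetv} (using $\|E(z)\|_\infty<1$ and $|z^{(2)}|<1$) together with the weak-$*$ compactness of balls in $\mathcal{B}(C(\overline{\mathbb D}),\mathcal{B}(\mathfrak{L}))$ and of bounded sets of operators show each $Y_F$ is a nonempty compactum, the restriction maps $\pi_G^F$ are continuous and consistent, and Theorem~\ref{kurosh} produces a compatible family $(\Gamma_F,\Delta_F)_F$, which glues to the required global $\Gamma\in\mathcal{B}(C(\overline{\mathbb D}),\mathcal{B}(\mathfrak{L}))_{\mathbb E}^+$ and $\Delta\in\mathcal{B}(\mathfrak{L})_{\mathbb E}^+$. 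The continuity hypothesis on $g$ ensures these objects behave well; note that unlike the scalar case $g$ is only assumed self-adjoint in the sense $g(z,w)=g(w,z)^*$, which is exactly what the symmetrization step in the separation argument needs.

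The main obstacle is the separation step in the operator-valued setting: identifying the separating functional with $\mathrm{tr}(\,\cdot\,C)$ for a self-adjoint trace-class $C$, and then verifying that the resulting block operator is genuinely a \emph{weakly admissible} $\mathcal{B}(\mathfrak{L})$-valued kernel rather than merely a weak kernel. This requires choosing the test elements of the cone carefully — one must use the complete positivity of $\Gamma$ (Definition~\ref{completepositive}), not just positivity of scalar kernels, so that the positivity conditions $(1-\psi_\alpha(z)\overline{\psi_\alpha(w)})C(z,w)\ge0$ come out in full operator strength for every $\alpha\in\overline{\mathbb D}$ simultaneously. Once admissibility of $C$ is secured, feeding it back into the hypothesis and taking $\epsilon\to0$ is routine, and the Kurosh globalization is essentially verbatim from the scalar proof.
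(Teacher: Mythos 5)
Your proposal is correct and follows exactly the route the paper intends: the paper states Theorem~\ref{keytheoremv} without a written proof, saying only that it follows from Lemma~\ref{onfinitesetv} by the same arguments as the scalar Theorem~\ref{keytheorem}, and your write-up is precisely that adaptation (finite-set separation by a trace-class functional, verification that the separating block operator is a weakly admissible $\mathcal{B}(\mathfrak{L})$-valued kernel, then Kurosh globalization). One minor caveat: the cone $\mathcal{C}_F(\mathcal{B}(\mathfrak{L}))$ need not have nonempty interior in the weak-$*$ topology when $\mathfrak{L}$ is infinite-dimensional, but this is harmless because separating a single point from a weak-$*$ closed convex cone requires only closedness and convexity, exactly as in the scalar argument.
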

 We now present the realization theorem (Theorem \ref{realization theorem}) in the vector-valued setting.  
\begin{theorem}\label{realization theorem vector}
    Given complex separable Hilbert spaces $\mathfrak{L}_1$ and $\mathfrak{L}_2,$ let $f : \mathbb E \to \mathcal{B}(\mathfrak{L}_1,\mathfrak{L}_2)$ be a function. Then the following statements are equivalent:
    \begin{itemize}
        \item [(i)] $f \in \mathcal{SA}_{\mathbb E}(\mathfrak{L}_1,\mathfrak{L}_2).$
        \item [(ii)]  $(I_{\mathfrak{L}_2} - f(z) f (w)^*)\oslash k(z, w)$ is a positive semi-definite function for all $\mathcal{B}(\mathfrak{L}_2)$-valued admissible kernel $k$ on $\mathbb E$.
         \item [(iii)] There exist a completely positive kernel $\Gamma: \mathbb E \times \mathbb E \rightarrow \mathcal{B}(C(\mathbb D), \mathcal{B}(\mathfrak{L}_2))$ and a $\mathcal{B}(\mathfrak{L}_2)$-valued weak kernel $\Delta$ on  $\mathbb E$ such that 
        \begin{equation*}
          I_{\mathfrak{L}_2} - f(z) f(w)^*=\Gamma(z, w)\Big(1- E(z)\overline{E(w)}\Big)+ (1-z^{(2)}\overline{w}^{(2)})\Delta(z,w),\; z,w \in \mathbb E.
        \end{equation*} 
         \item [(iv)] There are Hilbert spaces $\mathfrak{H}_1,\;\mathfrak{H}_2$, a unital $*$-representation $\rho :C(\overline{\mathbb D}) \rightarrow \mathcal{B}(\mathfrak{H}_1)$ and a unitary $V : \mathfrak{L}_2 \oplus \mathfrak{H} \rightarrow \mathfrak{L}_1 \oplus \mathfrak{H},$ where $\mathfrak{H}=\mathfrak{H}_1\oplus \mathfrak{H}_2,$ such that $$V=\begin{bmatrix}
A_1 & B_1 \\
C_1 & D_1
\end{bmatrix}, \; \text{and}$$
 \beqn \;\;\;\;\; f(z)^*= A_1 +B_1\begin{bmatrix}
    \rho(E(z))^* & 0\\ 0& \bar{z}^{(2)}I_{\mathfrak{H}_2}
\end{bmatrix}\left(I_\mathfrak{H}-D_1\begin{bmatrix}
    \rho(E(z))^* & 0\\ 0& \bar{z}^{(2)}I_{\mathfrak{H}_2}
\end{bmatrix}\right)^{-1}C_1.\eeqn
    \end{itemize}
\end{theorem}

\begin{remark}
    It is easy to see that part $\mathrm{(iv)}$ of Theorem \ref{realization theorem vector} is equivalent to saying that there exist Hilbert spaces $\mathfrak{H}_1,\;\mathfrak{H}_2$,  a unital $*$-representation $\rho :C(\overline{\mathbb D}) \rightarrow \mathcal{B}(\mathfrak{H}_1)$ and a unitary $U : \mathfrak{L}_1 \oplus \mathfrak{H} \rightarrow \mathfrak{L}_2 \oplus \mathfrak{H}, \; \mathfrak{H}=\mathfrak{H}_1\oplus \mathfrak{H}_2,$ such that $$U=\begin{bmatrix}
A & B \\
C & D
\end{bmatrix},\; \text{and}$$
we have $$\,\,\,\,\,\,\,\,\,\,f(z)=A +B\begin{bmatrix}
    \rho(E(z)) & 0\\ 0& z^{(2)}I_{\mathfrak{H}_2}
\end{bmatrix}\left(I_\mathfrak{H}-D\begin{bmatrix}
    \rho(E(z)) & 0\\ 0& z^{(2)}I_{\mathfrak{H}_2}
\end{bmatrix}\right)^{-1}C.$$ 
\end{remark}

As an application of Theorem \ref{realization theorem vector}, we give the vector-valued counterpart of the interpolation theorem (cf. \cite[Theorem 11.49]{AM2002}).
\begin{corollary}
    Let $\mathfrak{L}_1,\mathfrak{L}_2$ be two Hilbert spaces. Let $z_1,\ldots,z_n \in \mathbb E$ and $W_1,\ldots,W_n$ be the operators in $\mathcal{B}(\mathfrak{L}_1,\mathfrak{L}_2)$. Then the following are equivalent.
    \begin{itemize}
        \item [(i)] There is a function $f \in \mathcal{SA}_{\mathbb E}(\mathfrak{L}_1,\mathfrak{L}_2)$ such that $f(z_i)=W_i$ for all $i=1,\ldots,n$.
        \item [(ii)] For each $\mathcal{B}(\mathfrak{L}_2)$-valued admissible kernel $k$, we have $$\begin{bmatrix}(I_{\mathfrak{L}_2}-W_i W_j^*)\otimes k(z_i,z_j)\end{bmatrix}_{i,j=1}^n \geq 0.$$ 
        \item [(iii)]There exist a completely positive kernel $\Gamma: F \times F \to \mathcal{B}(C(\overline{\mathbb D}),\mathcal{B}(\mathfrak{L}_2))$ and a $\mathcal{B}(\mathfrak{L}_2)$-valued weak kernel $\Delta $ on $F$, where $F=\{z_1,\ldots,z_n\}$, such that $$I_{\mathfrak{L}_2}-W_iW_j^*=\Gamma(z_i,z_j)(1-E(z_i)\overline{E(z_j)})+ (1-z_i^{(2)}\bar{z}_j^{(2)})\Delta(z_i,z_j),\; 1 \leq i,j \leq n.$$
    \end{itemize}
\end{corollary}

\subsection{Toeplitz corona theorem} \label{SS5.2}
The classical corona problem asks for conditions on given functions $\varphi_1,\ldots,\varphi_n \in H^\infty(\mathbb D)$ under which there exist functions $ \psi_1,\ldots,\psi_n \in H^\infty(\mathbb D)$ satisfying $$\varphi_1 \psi_1 +\cdots+ \varphi_n\psi_n=1.$$ 

Carleson's theorem in \cite{CL1962} assures that the condition $\sum_{j=1}^n |\varphi_j|^2 \geq \delta$ for some $\delta>0,$ is necessary and sufficient. Arveson gave an operator-theoretic analogue of this result in \cite[Theorem 6.3]{AV1975}, commonly referred to as the Toeplitz corona theorem. Later, Schubert provided a short proof of the Toeplitz corona theorem in \cite{SB1978}. In fact, Schubert shows that for given $\varphi_1,\ldots,\varphi_n \in H^\infty(\mathbb D)$ and $\delta>0$, there exist $\psi_1,\ldots,\psi_n \in H^\infty(\mathbb D)$ with the property $\varphi_1 \psi_1 +\cdots+ \varphi_n\psi_n=1$ and $\sup_{z\in \mathbb D}\{|\psi_1(z)|^2+\cdots+|\psi_n(z)|^2\} \leq \frac{1}{\delta^2}$ (in other words, $(\psi_1,\ldots,\psi_n)^t\in \frac{1}{\delta}\mathcal{S}_\mathbb D(\mathbb C,\mathbb C^n)$) if and only if
\beq \label{coronaclassic} T_{\varphi_1} T_{\varphi_1}^*+\cdots+T_{\varphi_n} T_{\varphi_n}^* \geq \delta^2 I > 0, \eeq where $T_{\varphi_i}$ is Toeplitz operator with symbol $\varphi_i$ defined on the Hardy space $H^2(\mathbb D)$ for each $i=1, \ldots, n.$ For the unit polydisc and the unit ball, the Toeplitz corona theorem was obtained by E. Amar in \cite[Theorem 1.1]{Am2003}.

Note that the condition in \eqref{coronaclassic} is equivalent to the positive semi-definiteness of the product of $\sum_{j=1}^n \varphi_i(z)\overline{\varphi_i(w)}-\delta^2$ and the Szeg\"o kernel of the unit disc.
In the case of bidisc, the criterion involved a family of kernels instead of the Szeg\"o kernel. 
Below, we provide an analog of the Toeplitz corona theorem for the domain $\mathbb E,$ in fact, we prove a slightly more general version of this result. The proof involves a familiar technique that is used to prove \cite[Theorem 8.57, Theorem 11.65]{AM2002} and \cite[Theorem 5.2]{BS2022} for the unit disc, unit bidisc, and symmetrized bidisc, respectively. 
\begin{theorem} \label{vectorTC}
    Let $\mathfrak{L}_1,\mathfrak{L}_2$ and $\mathfrak{L}_3$ be complex separable Hilbert spaces. Suppose $\Phi: \mathbb E \rightarrow \mathcal{B}(\mathfrak{L}_1,\mathfrak{L}_2)$ and $\Theta: \mathbb E \rightarrow \mathcal{B}(\mathfrak{L}_3,\mathfrak{L}_2)$ are given functions. Then the following assertions are equivalent:
    \begin{itemize}
        \item [(i)] There exists a function $\Psi \in \mathcal{SA}_{\mathbb E}(\mathfrak{L}_3,\mathfrak{L}_1)$ such that $$\Phi(z)\Psi(z)=\Theta(z)\;\; \text{for all } z \in \mathbb E.$$
        \item [(ii)] For any finite set $F =\{z_1,\ldots,z_n\}\subseteq \mathbb E$, \beq \label{coronatoeplitz eq1}\begin{bmatrix}(\Phi(z_i)\Phi(z_j)^*-\Theta(z_i)\Theta(z_j)^*) \otimes k(z_i,z_j)\end{bmatrix}_{i,j=1}^n \geq 0
        \eeq for all $\mathcal{B}(\mathfrak{L}_2)$-valued admissible kernel $k.$
         \item [(iii)] There exist a completely positive kernel $\Gamma: \mathbb E \times \mathbb E \rightarrow \mathcal{B}(C(\mathbb D), \mathcal{B}(\mathfrak{L}_2))$ and a $\mathcal{B}(\mathfrak{L}_2)$-valued weak kernel $\Delta$ on $\mathbb E$ such that for all $z, w \in \mathbb E,$ \beq \label{coronatoeplitz eq2}\Phi(z)\Phi(w)^*-\Theta(z)\Theta(w)^*= \Gamma(z,w)(1-E(z)\overline{E(w)})+ (1-z^{2}\overline{w}^{(2)})\Delta(z,w).\eeq
    \end{itemize}
\end{theorem}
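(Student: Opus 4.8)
The plan is to deduce Theorem \ref{vectorTC} from the vector-valued realization theorem (Theorem \ref{realization theorem vector}) by a standard trick: absorb the data $\Phi,\Theta$ into a single operator-valued function to which the realization theorem applies. The implications $\mathrm{(i)}\Rightarrow\mathrm{(ii)}\Rightarrow\mathrm{(iii)}$ are the easy directions, while $\mathrm{(iii)}\Rightarrow\mathrm{(i)}$ (equivalently $\mathrm{(ii)}\Rightarrow\mathrm{(i)}$) requires the realization machinery plus a Toeplitz-corona-style argument.

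First I would prove $\mathrm{(i)}\Rightarrow\mathrm{(ii)}$. If $\Psi\in\mathcal{SA}_{\mathbb E}(\mathfrak L_3,\mathfrak L_1)$ with $\Phi\Psi=\Theta$, then by Theorem \ref{realization theorem vector}$\mathrm{(ii)}$ applied to $\Psi$, the function $(I_{\mathfrak L_1}-\Psi(z)\Psi(w)^*)\oslash k(z,w)$ is positive semi-definite for every $\mathcal{B}(\mathfrak L_1)$-valued admissible kernel $k$. Given a $\mathcal{B}(\mathfrak L_2)$-valued admissible kernel $k$, one checks that pre- and post-composing with the constant operators $\Phi(w)^*$ (i.e., conjugating the kernel inequality) together with the identity $\Phi(z)\Psi(z)=\Theta(z)$ gives
\[
[\Phi(z)\Phi(w)^*-\Theta(z)\Theta(w)^*]\oslash k(z,w)
= \big(\text{congruence of}\ (I_{\mathfrak L_1}-\Psi(z)\Psi(w)^*)\oslash k'(z,w)\big)\ge 0,
\]
where $k'$ is the admissible $\mathcal{B}(\mathfrak L_1)$-valued kernel obtained from $k$ and $\Phi$; the point is that congruence by a fixed (point-dependent but factor-wise) operator preserves positive semi-definiteness of the Schur/tensor product. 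Then $\mathrm{(ii)}\Rightarrow\mathrm{(iii)}$ is exactly Theorem \ref{keytheoremv} applied to the continuous self-adjoint function $g(z,w)=\Phi(z)\Phi(w)^*-\Theta(z)\Theta(w)^*$ with values in $\mathcal{B}(\mathfrak L_2)$.

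The substantive direction is $\mathrm{(iii)}\Rightarrow\mathrm{(i)}$. Starting from the representation $\Phi(z)\Phi(w)^*-\Theta(z)\Theta(w)^*=\Gamma(z,w)(1-E(z)\overline{E(w)})+(1-z^{(2)}\overline{w}^{(2)})\Delta(z,w)$, I would run the lurking-isometry argument as in the proof of $\mathrm{(iii)}\Rightarrow\mathrm{(iv)}$ of Theorem \ref{realization theorem} but with $\Phi(z)^*$ and $\Theta(z)^*$ in place of the scalar $f(z)$: use Proposition \ref{decompositionresultv} to factor $\Gamma(z,w)(h_1h_2^*)=L(z)(h_1)L(w)(h_2)^*$ with associated unital $*$-representation $\rho$, factor $\Delta(z,w)=G(z)G(w)^*$, rearrange the identity into the form
\[
\Phi(z)\Phi(w)^* + \langle X(z)\eta(z),X(w)\eta(w)\rangle_{\mathfrak H}
= \Theta(z)\Theta(w)^* + \langle \eta(z),\eta(w)\rangle_{\mathfrak H},
\]
where $\eta(z)=\big(\begin{smallmatrix}L(z)1\\ G(z)\end{smallmatrix}\big)$ and $X(z)=\rho(E(z))\oplus z^{(2)}I_{\mathfrak H_2}$, and then define an isometry $V$ from $\mathrm{span}\{\Phi(z)^* v \oplus X(z)\eta(z) v\}$ to $\mathrm{span}\{\Theta(z)^* v\oplus \eta(z)v\}$ (with $v$ ranging over $\mathfrak L_2$), extend it to a unitary $\mathfrak L_1\oplus\mathfrak H\to\mathfrak L_3\oplus\mathfrak H$ after enlarging $\mathfrak H$, and read off block entries. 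Writing $V=\big(\begin{smallmatrix}A&B\\ C&D\end{smallmatrix}\big)$ and solving the two resulting equations as in Theorem \ref{realization theorem} gives $\Psi(z):=A+BX(z)(I_{\mathfrak H}-DX(z))^{-1}C$ with $\Phi(z)\Psi(z)=\Theta(z)$; the realization form of $\Psi$ together with Theorem \ref{realization theorem vector}$\mathrm{(v)}\Rightarrow\mathrm{(i)}$ shows $\Psi\in\mathcal{SA}_{\mathbb E}(\mathfrak L_3,\mathfrak L_1)$.

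The main obstacle I anticipate is bookkeeping in the lurking-isometry step: verifying that the map $V$ is well defined and isometric on the relevant span requires the rearranged identity to hold with the correct adjoints and the correct domain/range Hilbert spaces $\mathfrak L_1$ versus $\mathfrak L_3$, and one must be careful that $X(z)$ is applied on the correct side (here it acts after $L$ and $G$, using $\rho(E(z))L(z)1=L(z)(E(z))$). A secondary technical point is the extension of the isometry to a unitary between the possibly differently-sized spaces $\mathfrak L_1\oplus\mathfrak H$ and $\mathfrak L_3\oplus\mathfrak H$, which is handled exactly as in \cite[Section 11.3]{AM2002} by adjoining an infinite-dimensional summand; and one should note that the admissibility/positivity juggling in $\mathrm{(i)}\Rightarrow\mathrm{(ii)}$ uses that the tensor product of a positive semi-definite kernel with a fixed positive operator, and congruence by a fixed operator, both preserve positivity. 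Once these are in place, Theorem \ref{Toeplitzcoronatheorem1} follows by taking $\mathfrak L_1=\mathbb C^n$, $\mathfrak L_2=\mathfrak L_3=\mathbb C$, $\Phi(z)=(\varphi_1(z),\dots,\varphi_n(z))$, $\Theta(z)=\delta$, and rescaling by $1/\delta$.
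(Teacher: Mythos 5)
Your overall architecture coincides with the paper's: $\mathrm{(ii)}\Rightarrow\mathrm{(iii)}$ is Theorem \ref{keytheoremv} applied to $g(z,w)=\Phi(z)\Phi(w)^*-\Theta(z)\Theta(w)^*$, and $\mathrm{(iii)}\Rightarrow\mathrm{(i)}$ is exactly the paper's lurking-isometry argument --- factor $\Gamma$ by Proposition \ref{decompositionresultv} and $\Delta$ as $G(z)G(w)^*$, build the isometry from $\operatorname{span}\{\Phi(z)^*v\oplus\rho(E(z))^*L(z)(1)^*v\oplus\overline{z}^{(2)}G(z)^*v\}$ onto $\operatorname{span}\{\Theta(z)^*v\oplus L(z)(1)^*v\oplus G(z)^*v\}$, extend to a unitary $\mathfrak{L}_1\oplus\mathfrak{H}\to\mathfrak{L}_3\oplus\mathfrak{H}$, and read off the block entries. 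Note that what this produces is a transfer-function realization of $\Psi(z)^*$ (the relevant intertwining is $\rho(h_1)^*L(z)(h_2)^*=L(z)(h_1h_2)^*$ and the operator appearing in the domain vectors is $X(z)^*$, not $X(z)$), so it is part $\mathrm{(iv)}$, rather than $\mathrm{(v)}$, of Theorem \ref{realization theorem vector} that certifies $\Psi\in\mathcal{SA}_{\mathbb E}(\mathfrak{L}_3,\mathfrak{L}_1)$; these are the bookkeeping points you yourself flag, and they are harmless.

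The one place you genuinely diverge is $\mathrm{(i)}\Rightarrow\mathrm{(ii)}$, and that is where your argument has a gap as written. Condition $\mathrm{(ii)}$ of Theorem \ref{realization theorem vector} applied to $\Psi:\mathbb E\to\mathcal{B}(\mathfrak{L}_3,\mathfrak{L}_1)$ yields positivity of $(I_{\mathfrak{L}_1}-\Psi(z)\Psi(w)^*)\oslash k$ only for $\mathcal{B}(\mathfrak{L}_1)$-valued admissible kernels $k$, whereas the kernel appearing in $\mathrm{(ii)}$ of Theorem \ref{vectorTC} is $\mathcal{B}(\mathfrak{L}_2)$-valued; after the substitution $v_j\mapsto\Phi(z_j)^*v_j$ the two tensor factors live over different Hilbert spaces, and there is no single ``admissible $\mathcal{B}(\mathfrak{L}_1)$-valued kernel $k'$ obtained from $k$ and $\Phi$'' that makes your congruence a direct instance of the realization theorem. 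The gap is repairable --- for instance, fix the test vectors $u_j$, pass to the scalar admissible weak kernel $\kappa(z_i,z_j)=\langle k(z_i,z_j)u_j,u_i\rangle$ on the finite set, extend by zero and perturb by $\epsilon k_1$ as in the proof of Theorem \ref{keytheorem}, and then apply the realization theorem to the $\mathcal{B}(\mathfrak{L}_1)$-valued kernel $\kappa\, I_{\mathfrak{L}_1}$ --- but this step must be supplied. The paper sidesteps the issue entirely by not quoting the realization theorem here: it reruns the reproducing-kernel compression argument, applying $I-\hat\Psi(s\cdot T)^*\hat\Psi(s\cdot T)\ge 0$ to the test vectors $\sum_jF(z_j)u_j\otimes\Phi(z_j)^*v_j$ and then letting $s\to1^-$ and $r\to1^-$. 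Either route works; yours is shorter once the kernel mismatch is addressed.
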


\begin{proof}
    $\mathrm{(i)} \implies \mathrm{(ii)}:$ To prove this implication, we use the similar techniques that proves $\rm(i)$ implies $\rm{(ii)}$ in Theorem \ref{realization theorem}.
   Let $k$ be a $\mathcal{B}(\mathfrak{L}_2)$-valued admissible kernel on $\mathbb E.$ 
    Then there exist a Hilbert space $\mathfrak{H}$ and a function $Q :\mathbb E \to \mathcal{B}(\mathfrak{L}_2, \mathfrak{H})$ such that $k(z,w)=Q(z)^*Q(w).$ 
    Let $H_n(k)=\bigvee\{Q(z_i)u: i=1,\ldots,n,\; u\in \mathfrak{L}_2\}.$
    Define the operators $T_i$ on $H_n(k)$ as $T_i(Q(z_j)u)=\bar{z}_j^{(i)} Q(z_j)u$ for $i=1,2,3,\; j=1,\ldots, n.$ Since $k$ is admissible, $\|T_2\| \leq 1$ and $\|\psi_\alpha(T)\| \leq 1$ for all $\alpha \in \overline{\mathbb D}$. In addition to hypothesis, assume that $\Psi\in A_{\overline{\mathbb E}}(\mathfrak{L}_3,\mathfrak{L}_1).$ A vector-valued version of the Lemma \ref{hat}, gives that $\hat{\Psi}(z)=\Psi(\bar{z})^*$ is in $\mathcal{SA}_{\mathbb{E}}(\mathfrak{L}_1,\mathfrak{L}_3) \cap A_{\overline{\mathbb E}}(\mathfrak{L}_1,\mathfrak{L}_3).$ 
    For any $0 <s<1$, the map $\hat{\Psi}(s \cdot T): H_n(k) \otimes \mathfrak{L}_1 \to H_n(k) \otimes \mathfrak{L}_3$ is given by 
   $$\hat{\Psi}(s \cdot T) (Q(z_i)u\otimes v)=Q(z_i)u \otimes \Psi(s \cdot z_i)^* v,\; i=1,\ldots,n, u \in \mathfrak{L}_2, v\in \mathfrak{L}_1.$$ 
    For $z_1,\dots, z_n \in \mathbb E, u_1, \ldots, u_n \in \mathfrak{L}_2$ and $v_1,\ldots v_n \in \mathfrak{L}_1,$    \begin{align*}
          \sum_{i,j=1}^n \left \langle \left( I_{H_n(k)\otimes \mathfrak{L}_1}-\hat{\Psi}(s \cdot T)^* \hat{\Psi}(s \cdot T) \right)(Q(z_j)u_j\otimes \Phi(z_j)^*v_j), Q(z_i)u_i\otimes \Phi(z_i)^*v_i \right \rangle \geq 0.
           \end{align*}
This yields that
\beqn
\sum_{i,j=1}^n \left \langle \left(\Phi(z_i)\Phi(z_j)^*-\Phi(z_i)\Psi(s \cdot z_i)\Psi(s \cdot z_j)^*\Phi(z_j)^*\right ) v_j,v_i\right \rangle \inp{k(z_i,z_j)u_j}{u_i} \geq 0.
\eeqn 
By taking $s\to 1^{-}$ we get $$\sum_{i,j=1}^n \left \langle \left(\Phi(z_i)\Phi(z_j)^*-\Phi(z_i)\Psi(z_i)\Psi(z_j)^*\Phi(z_j)^*\right ) v_j,v_i\right \rangle \inp{k(z_i,z_j)u_j}{u_i} \geq 0.$$ 
For $\Psi\in \mathcal{SA}_\mathbb E(\mathfrak{L}_3,\mathfrak{L}_1)$, consider $\Psi_r:\overline{\mathbb E}\to \mathcal{B}(\mathfrak{L}_3,\mathfrak{L}_1)$ by $\Psi_r(z)=\Psi(r \cdot z)$ for any $0 <r <1.$ 
Thus, $\Psi_r \in \mathcal{SA}_\mathbb E(\mathfrak{L}_3,\mathfrak{L}_1) \cap A_{\overline{\mathbb E}}(\mathfrak{L}_3,\mathfrak{L}_1).$ We complete the proof by taking $r \to 1^-.$ 

    $\mathrm{(ii)} \implies \mathrm{(iii)}:$ It follows from Theorem \ref{keytheoremv}.
    
$\mathrm{(iii)} \implies \mathrm{(i)}:$ Here, we invoke the technique of lurking isometry already used in the proof of the realization theorem. Suppose the condition in \eqref{coronatoeplitz eq2} holds true.
  By Proposition \ref{decompositionresultv}, there exist a Hilbert space $\mathfrak{H}_1$ and a function $L: \mathbb E \to \mathcal{B}(C(\overline{\mathbb D}), \mathcal{B}(\mathfrak{H}_1,\mathfrak{L}_2))$ such that $\Gamma(z,w)(h_1 \bar{h}_2)=L(z)(h_1)L(w)(h_2)^*$ for all $z,w\in \mathbb E$ and $h_1,h_2 \in C(\overline{\mathbb D}).$ Moreover, there exists a unital $*$-representation $\rho: C(\overline{\mathbb D}) \to \mathcal{B}(\mathfrak{H}_1).$ By Theorem 2.62 of \cite{AM2002}, there exist a Hilbert space $\mathfrak{H}_2$ and a function $G:\mathbb E \to \mathcal{B}(\mathfrak{H}_2, \mathfrak{L}_2)$ such that $\Delta(z,w)=G(z)G(w)^*$. 
  Therefore, \eqref{coronatoeplitz eq2} can be re-written as
     \begin{align*}
      & \Phi(z)\Phi(w)^*+ L(z)(E(z))L(w)(E(w))^* + z^{(2)}\overline{w}^{(2)}G(z)G(w)^*\\
      &= \Theta(z)\Theta(w)^* +L(z)(1)L(w)(1)^* + G(z)G(w)^*.
  \end{align*}
  Consider $\mathfrak{H}=\mathfrak{H}_1 \oplus\mathfrak{H}_2$ and subspaces given by 
  \beqn
  \mathfrak{H}_d & =& \mathrm{span}\{ \Phi(z)^* v\oplus \rho(E(z))^* L(z)(1)^* v \oplus \bar{z}^{(2)}G(z)^*v: v \in \mathfrak{L}_2, z\in \mathbb E\} \subseteq \mathfrak{L}_1 \oplus \mathfrak{H},\\
  \mathfrak{H}_r &=& \mathrm{span}\{ \Theta(z)^* v \oplus L(z)(1)^* v \oplus G(z)^* v: z\in \mathbb E, v\in \mathfrak{L}_2\} \subseteq \mathfrak{L}_3 \oplus \mathfrak{H}.
  \eeqn 
    The map $V: \mathfrak{H}_d \to \mathfrak{H}_r$ defined by 
    $$V \begin{pmatrix}
        \Phi(z)^* v \\ \rho(E(z))^* L(z)(1)^* v\\ \bar{z}^{(2)}G(z)^*v
    \end{pmatrix} =V \begin{pmatrix}
        \Phi(z)^* v\\ {\begin{bmatrix}
        \rho(E(z))^* & 0\\ 0 & \bar{z}^{(2)} I_{\mathcal{H}_2}
    \end{bmatrix} \begin{pmatrix}
        L(z)(1)^*v \\ G(z)^*v
    \end{pmatrix}}
    \end{pmatrix}=\begin{pmatrix}
        \Theta(z)^*v\\ L(z)(1)^* v \\ G(z)^*v
    \end{pmatrix}$$
    is an isometry. We now extend $V$ to an unitary operator from $\mathfrak{L}_1 \oplus\mathfrak{H}$ onto $\mathfrak{L}_3 \oplus \mathfrak{H}$ and write $V=\begin{bmatrix}
       A_1  & B_1\\ C_1 & D_1
    \end{bmatrix}.$
    Set $X(z)= \begin{bmatrix}
        \rho(E(z)) &0\\ 0 & z^{(2)} I_{\mathfrak{H}_2} \end{bmatrix}.$ 
        Therefore, we have
    \beqn \label{coronatoeplitz eq3}A_1 \Phi(z)^*v + B_1 X(z)^* \left(\substack{L(z)(1)^*v \\ G(z)^*v}\right)= \Theta(z)^*v,\\
   \label{coronatoeplitz eq4} C_1 \Phi(z)^*v + D_1 X(z)^* \left(\substack{L(z)(1)^*v \\ G(z)^*v}\right)= \left(\substack{L(z)(1)^*v \\ G(z)^*v}\right).
    \eeqn 
    By combining above two equations, we get $$\Theta(z)^*v= A_1 \Phi(z)^*v + B_1 X(z)^*(I_\mathfrak{H}-D_1X(z)^*)^{-1}C_1 \Phi(z)^*v, \; z \in \mathbb E, \; v\in \mathfrak{L}_2.$$ 
    Therefore, it is natural to define $\Psi:\mathbb E \to \mathcal{B}(\mathfrak{L}_3,\mathfrak{L}_1)$ in such that $$\Psi(z)^*= A_1 +B_1 X(z)^* (I_\mathfrak{H}-D_1 X(z)^*)^{-1} C_1\in \mathcal{B}(\mathfrak{L}_1,\mathfrak{L}_3).$$ By Theorem \ref{realization theorem vector}(iv), $\Psi \in \mathcal{SA}_\mathbb E(\mathfrak{L}_3,\mathfrak{L}_1).$ Moreover, $\Theta(z)=\Phi(z)\Psi(z).$ This completes the proof.
\end{proof}
The proof of Theorem \ref{Toeplitzcoronatheorem1} follows by taking $\mathfrak{L}_3=\mathfrak{L}_2=\mathbb C$, and $\mathfrak{L}_1=\mathbb C^n$ in Theorem \ref{vectorTC}.
\subsection*{Acknowledgments}
The author, S. Jain, gratefully acknowledges Prof. Arup Chattopadhyay for the financial support provided through his project (Ref No. MATHSPNSERB01119xARC002).
The work of S. Kumar was supported in the form of MATRICS grant (Ref No. MTR/2022/000457) of Science and Engineering Research Board (SERB). 
Support for the work of M. K. Mal was provided in the form of a Prime Minister's Research Fellowship (PMRF / 2502827). 
Support for the work of P. Pramanick was provided by the Department of Science and Technology (DST) in the form of the Inspire Faculty Fellowship (Ref No. DST/INSPIRE/04/2023/001530).


\begin{thebibliography}{33}

\bibitem{AWY2007} A. A. Abouhajar, M. C. White, and N. J. Young, 
A Schwarz lemma for a domain related to $\mu$-synthesis, {\it 
J. Geom. Anal.} {\bf 17} (2007), 717-750.

\bibitem{AJ1990} J. Agler, On the representation of certain holomorphic functions defined on a polydisc, in {\it Topics in operator theory: Ernst D. Hellinger memorial volume}, Oper. Theory Adv. Appl., {\bf 48} (1990), 47-66. 
\bibitem{AM2002} J. Agler and J.~E. McCarthy,  \textit{Pick interpolation and Hilbert function spaces}, Grad. Stud. Math., 44, Amer. Math. Soc., Providence, RI, 2002; MR1882259

\bibitem{AMc2003} J. Agler and J.~E. McCarthy, Norm preserving extensions of holomorphic functions from subvarieties of the bidisk, \textit{Ann. of Math. (2)}, {\bf 157}(2003), no. 1, 289-312.

\bibitem{AM2020}  J. Agler and J.~E. McCarthy, \textit{Operator analysis—Hilbert space methods in complex analysis}, Cambridge Tracts in Math., ({\bf 219}), Cambridge University Press, Cambridge, 2020.

\bibitem{AY1999} J. Agler and N. J. Young, A commuting lifting theorem for a domain in $\mathbb C^2$ and spectral interpolation,  \textit{J. Funct. Anal.}, {\bf 161}(1999) 452-477.

\bibitem{AY2017} J. Agler and N. J. Young, Realization of functions on the symmetrized bidisc, \textit{J. Math. Anal. Appl.}, {453} (2017), 227-240.

\bibitem{Am2003} E. Amar, On the Toeplitz corona problem, \textit{Publ. Mat.}, {\bf 47}(2003), no. 2, 489-496.

\bibitem{AT2003} C. Ambrozie, D. Timotin, A von Neumann type inequality for certain domains in $\mathbb C^n$, \textit{Proc. Amer. Math. Soc.}, {\bf 131}(2003), no. 3, 859-869.

\bibitem{AP1990} A. V. Arkhangel’skii, L. S. Pontryagin, \textit{General topology-I}, Springer, Berlin, 1990.

\bibitem{AV1975} W. Arveson, Interpolation problems in nest algebras, \textit{J. Funct. Anal.}, {\bf 20}(1975), no. 3, 208-233.

\bibitem{BB2004} J. A. Ball and V. Bolotnikov, Realization and interpolation for Schur-Agler-class functions on domains with matrix polynomial defining function in $\mathbb C^n$, \textit{J. Funct. Anal.}, {\bf 213}(2004), no. 1, 45-87.

\bibitem{BMV2018} J. A. Ball, G. Marx, V. Vinnikov, Interpolation and transfer-function realization for the noncommutative Schur-Agler class, in {\it Operator theory in different settings and related applications}, Oper. Theory Adv. Appl., {\bf 262}(2018), 23-116.

\bibitem{BT1998} J. A. Ball and T. T. Trent, Unitary colligations, reproducing kernel Hilbert spaces, and Nevanlinna-Pick interpolation in several variables, \textit{J. Funct. Anal.}, {\bf 157}(1998), no. 1, 1-61.

\bibitem{BT2014}  T. Bhattacharyya, The Tetrablock as a spectral set, \textit{Indiana Univ. Math. J.}, {\bf 63}(2014), no. 6, 1601–1629.

\bibitem{BS2018} T. Bhattacharyya and H. Sau, Holomorphic functions on the symmetrized bidisk—realization, interpolation and extension, \textit{J. Funct. Anal.}, {\bf 274}(2018), no. 2, 504-524.

\bibitem{BS2022} T. Bhattacharyya and H. Sau, Interpolating sequences and the Toeplitz-Corona theorem on the symmetrized bidisk, \textit{J. Operator Theory}, {\bf 87}(2022), no. 2, 435-459.

\bibitem{BCJ2024} S. Bera, S. Chavan, S. Jain, A transference principle for involution-invariant functional Hilbert spaces, to appear in \textit{Journal of Geometric Analysis}, arXiv:2408.04384v2.

\bibitem{CL1962} L. Carleson, Interpolations by bounded analytic functions and the corona problem, \textit{Ann. of Math. (2)}, {\bf 76}(1962), 547-559

\bibitem{CJ2000} J. B. Conway, \textit{A course in operator theory}, Grad. Stud. Math.,{21}(2000), American Mathematical Society.

\bibitem{DMM2007} M.~A. Dritschel, S.~A.~M. Marcantognini and S.~A. McCullough, Interpolation in semigroupoid algebras, \textit{J. Reine Angew. Math.} {\bf 606} (2007), 1--40.

\bibitem{DM2007} M.~A. Dritschel and S.~A. McCullough, Test functions, kernels, realizations and interpolation, in \textit{Operator theory, structured matrices, and dilations}, Theta Ser. Adv. Math., {\bf 7}(2007), 153-179.

\bibitem{GR2024} G. Ghosh, S Shyam Roy,  Toeplitz operators on the proper images of bounded symmetric domains, 
\url{https://arxiv.org/abs/2405.08002}, 2024.

\bibitem{HM1969}  K. T. Hahn, J. Mitchell, $H^p$ spaces on bounded symmetric domains, {\it Trans. Amer. Math. Soc.} {\bf 146} (1969), 521-531.

\bibitem{MRZ2013} G. Misra, S. S. Roy, G. Zhang, Reproducing kernel for a class of weighted Bergman spaces on the symmetrized polydisc, {\it Proc. Amer. Math. Soc.} {\bf 141} (2013), no. 7, 2361-2370.

\bibitem{PR2016}  V. Paulsen and M. Raghupathi, {\it An Introduction to the Theory of Reproducing Kernel Hilbert Spaces}, Cambridge Stud. Adv. Math., 152, Cambridge University Press, Cambridge, 2016.

\bibitem{RD1991} W. Rudin, \textit{Functional analysis}, Internat. Ser. Pure Appl. Math., McGraw-Hill, Inc., New York, 1991.

\bibitem{VS2005} V. Scheidemann, \textit{Introduction to complex analysis in several variables},  Birkhäuser Verlag, Basel, 2005.

\bibitem{SB1978} C. F. Schubert, The corona theorem as an operator theorem, \textit{Proc. Amer. Math. Soc.}, {\bf 69}(1978), no. 1, 73-76.

\bibitem{SZ1974} Z. Slodkowski and W. \.Zelazko, On joint spectra of commuting families of operators, \textit{Studia Math.} {\bf 50} (1974), 127-148.

\bibitem{VF1982} F.-H. Vasilescu, Analytic functional calculus and spectral decompositions, {\bf 1}(1982), \textit{Math. Appl.} (East European Ser.), D. Reidel Publishing Co., Dordrecht.


\end{thebibliography}
\end{document}